\newif\if@noproofreadmark \@noproofreadmarkfalse
\def\noproofreadmark{\@noproofreadmarktrue}
\def\addspan#1{%
 \if@noproofreadmark%
   #1%
 \else%
  \textcolor{red}{#1}%
 \fi%
}
\newtheorem{Theorem}{Theorem}[section]
\newtheorem{Proposition}[Theorem]{Proposition}
\newtheorem{Corollary}[Theorem]{Corollary}
\theoremstyle{definition}
\newtheorem{Definition}{Definition}
\theoremstyle{remark}
\newtheorem{Remark}[Theorem]{Remark} 
\numberwithin{equation}{section}
\newcommand{\R}{\mathbb R}
\newcommand{\E}{\mathbb E}
\newcommand{\C}{\mathbb C}
\newcommand{\Z}{\mathbb Z}
\newcommand{\D}{\mathbb D}
\newcommand{\ps}{\sc \small PS}
\newcommand{\SU}{{\rm SU}_2}
\newcommand{\su}{\mathfrak {su}_2}
\newcommand{\LSU}{\Lambda {\rm SU}_2}
\newcommand{\lsu}{\Lambda {\mathfrak {su}}_2}
\newcommand{\SL}{{\rm SL}_2 \mathbb C}
\newcommand{\LSUP}{\Lambda^+ {\rm SU}_2}
\newcommand{\LSUN}{\Lambda^- {\rm SU}_2}
\newcommand{\LSUPN}{\Lambda^+_{*} {\rm SU}_2}
\newcommand{\LSUNN}{\Lambda^-_{*} {\rm SU}_2}
\newcommand{\LSUPM}{\Lambda^{\pm} {\rm SU}_2}
\newcommand{\id}{\operatorname{Id}}
\newcommand{\ad}{\operatorname{Ad}}
\newcommand{\di}{\operatorname{diag}}
\newcommand{\offdi}{\operatorname{offdiag}}
\newcommand{\etn}{\eta_n}
\newcommand{\etm}{\eta_m}
\newcommand{\Fn}{F_{n}}
\newcommand{\Gm}{G_{m}}
\renewcommand{\l}{\lambda}
\newcommand{\eq}[1]{\begin{align*}#1 \end{align*}}
\begin{document}
\title{Nonlinear d'Alembert formula for 
 discrete pseudospherical surfaces}
\author[S.-P.~Kobayashi]{Shimpei Kobayashi}
 \address{Department of Mathematics, Hokkaido University, Sapporo, 060-0810, Japan}
 \email{shimpei@math.sci.hokudai.ac.jp}
 \thanks{The author is partially supported by Kakenhi 26400059}
\subjclass[2010]{Primary~53A10, Secondary~53F07, 22E65}
\keywords{Discrete differential geometry; pseudospherical surface; loop groups; integrable systems}
\date{\today}
\pagestyle{plain}
\begin{abstract} 
 On the basis of loop group decompositions (Birkhoff decompositions), we give 
 a discrete version of the nonlinear d'Alembert formula, 
 a method of separation of variables of difference equations, for 
 discrete constant negative Gauss curvature (pseudospherical) surfaces in 
 Euclidean three space. We also compute two examples by this formula in detail.
\end{abstract}
\maketitle
\section*{Introduction}
 The study of constant negative Gauss curvature surfaces 
 (pseudospherical surfaces, {\ps} surfaces in this paper) 
 in Euclidean three space $\E^3$ was important in classical 
 differential geometry, see for example \cite{Bi}.
 On the one hand the method of integrable systems 
 in theory of differential equations, which started from 1960s, gave
 a new aspect for the study of {\ps} surfaces: 
 The structure equation for {\ps} surfaces 
 is a famous integrable system {\it sine-Gordon} equation, that is, 
 $u_{xy}- \sin u =0$ under the {\it asymptotic Chebyshev coordinates} $(x, y) \in \R^2$
 for {\ps} surfaces.

 Moreover, {\ps} surfaces can be characterized as the existence of 
 a moving frame with an additional parameter $\l \in \R^{\times}$, the so-called 
 {\it spectral parameter}, and the compatibility condition of the moving frame 
 becomes the sine-Gordon equation, that is, {\ps} surfaces can be characterized by 
 the existence of a family of flat connections 
 on a trivial bundle $\R^2 \times \SU$. Such moving frame is called the 
 {\it extended frame}, and it is known that derivative of the extended frame 
 with respect to the spectral parameter reproduces the original {\ps} surface.
 It is fundamental to see that the extended frame 
 takes values in the loop group of $\SU$ and thus the loop group method 
 can be applied for {\ps} surfaces. 

 In \cite{Krichever, Toda, BS}, it was shown that loop group decompositions 
 (Birkhoff decompositions, see Theorem \ref{thm:Birkhoff}) 
 of the extended frame $F$ of a {\ps} surface 
 induced a pair of $1$-forms $(\xi_+, \xi_-)$, 
 that is, $F=F_+ F_-=G_- G_+$ with $\xi_+ = F_+^{-1} d F_+$ and 
 $\xi_- = G_-^{-1} d G_-$. Then it was proved that 
 $\xi_+$ and $\xi_-$ depended only on $x$ and $y$, respectively.
 Conversely it was shown that solving the pair of ordinary differential
 equations $d F_+ = F_+ \xi_+$ and $d G_+ = G_+ \xi_-$ and using 
 the loop group decomposition, the extended frame could be recovered.
 This construction is called the {\it nonlinear d'Alembert formula} for 
 {\ps} surfaces.

 A discrete analogue of smooth {\ps} surfaces was defined in \cite{BP}: 
 First of all discrete surfaces are maps from quadrilateral lattice
 $(n, m) \in \Z^2$ into $\E^3$. Then discrete {\ps} surfaces are 
 defined in terms of some geometric properties which are analogous 
 to the case of smooth {\ps} surfaces. 
 It is essential to see that the discrete {\ps} surfaces 
 have a discrete extended frame which is an exact analogue of the smooth 
 extended frame of a {\ps} surface. Therefore it is natural to expect 
 that the loop group method can be applied. In fact in \cite{PW}, the 
 discrete {\ps} surfaces were obtained by a loop group action, and 
 in \cite{Hoffmann}, several examples were discussed with taking  into 
 account a loop group structure.

 In this paper, we show that there exists a discrete version of 
 nonlinear d'Alembert formula for discrete {\ps} surfaces:
 Decomposing the discrete extended frame $F$ by loop group decompositions 
 (Birkhoff decompositions), 
 that is $F = F_+ F_-$ and $F =G_- G_+$,
 the discrete extended frame $F$ induces a pair of 
 discrete potentials $(\xi_+, \xi_-)$ with $\xi_+ = F_+^{-1} (n, m) F_+(n+1, m)$
 and $\xi_- =G_-^{-1} (n, m) G_-(n, m+1)$. It will be shown that 
 $\xi_+$ and $\xi_-$ depend only on $n \in \Z$ and $m \in \Z$, respectively, 
 see Theorem \ref{thm:potential}. Conversely using the loop group decomposition and 
 a pair of ordinary difference equations $F_+(n+1) = F_+(n) \xi_+(n)$ and 
 $G_-(m+1) = G_-(m) \xi_-(m)$, one has the discrete extended frame $F$ of 
 the original discrete {\ps} surface, see Theorem \ref{thm:converseconstruction}.
 Finally derivative of the discrete extended Frame reproduces the original discrete 
 {\ps} surface.
 We call this construction a {\it nonlinear d'Alembert formula} for 
 discrete {\ps} surfaces.
 Moreover using the nonlinear d'Alembert formula, we consider two 
 examples of discrete {\ps} surfaces in detail. 
 It is interesting that difference equations 
 obtained by these examples (see equations \eqref{eq:DrotsinG} and \eqref{eq:dPIII}) 
 are the same difference equations 
 obtained by previous works \cite{RGH, Hoffmann}. 
 
 This paper is organized as follows:
 In Section \ref{sc:Pre}, basic results about smooth/discrete 
 {\ps} surfaces will be recalled. In particular we will recall
 the nonlinear d'Alembert formula for smooth {\ps} surfaces in Theorem  
 \ref{thm:dAlembert}.
 In Section \ref{sc:PS}, we will give a nonlinear d'Alembert formula 
 for discrete {\ps} surfaces using loop group decompositions.
 Theorems \ref{thm:potential} and \ref{thm:converseconstruction} 
 are the main results of this paper.
 In Section \ref{sc:Exam},
 two examples of discrete {\ps} surfaces 
 are discussed in detail. 
 In Appendix \ref{sc:App}, we will discuss a relation between a smooth/discrete flow of 
 constant torsion space curves and a smooth/discrete {\ps} surface in $\E^3$.
\section{Preliminaries}\label{sc:Pre}
 In this section, we recall basic notation and results about 
 smooth and discrete {\ps} surfaces in $\E^3$ \cite{MS, BP}
 and the nonlinear d'Alembert formula for smooth {\ps} surfaces \cite{Toda, DIS, 
 Krichever}.
 Throughout this paper, we use the Pauli matrices $\sigma_j \;(j=1, 2, 3)$ as follows:
\eq{
 \sigma_1=
\begin{pmatrix}
0 & 1 \\
1 & 0
\end{pmatrix},\;\;
\sigma_2=
\begin{pmatrix}
0 & -i \\
i & 0
\end{pmatrix}
\;\;\mbox{and}\;\;
 \sigma_3 = 
\begin{pmatrix}
1 & 0 \\
0 & -1
\end{pmatrix}.
}
\subsection{Pseudospherical surfaces}\label{subsc:pssurf}
 We first identify $\E^3$ with the Lie algebra 
 of the special unitary group $\SU$, which will be denoted by $\su$: 
\eq{
  (x, y, z)^t \in \E^3 \longleftrightarrow
 \frac{i}{2}x \sigma_1 - \frac{i}{2}  y \sigma_2 + \frac{i}{2} 
 z \sigma_3 \in \su. 
}
 Let $f$ be a {\ps} surface with Gaussian curvature $K=-1$ 
 in $\E^3$. 
 It is known that there exist the {\it asymptotic Chebyshev coordinates} 
 $(x, y) \in \R^2$ for $f$, that is, they are
 asymptotic coordinates normalized by $|f_x| = |f_y| =1$. 
 Here the subscripts $x$ and $y$ denote the 
 $x$- and $y$-derivatives $\partial_x$ and $\partial_y$, 
 respectively. 
 Let $\{e_1, e_2, e_3\}$ be the Darboux frame rotating on the tangent plane clockwise angle $u$, 
 where $u$ is the angle between two asymptotic lines.
 Then there exists a $\widetilde F$ taking values in $\SU$ such that
\begin{equation}\label{eq:Darbouxframe}
 e_1 =  - \frac{i}{2} \widetilde F\sigma_1\widetilde F^{-1}, \;\;
 e_2 =  -\frac{i}{2} \widetilde F\sigma_2\widetilde F^{-1} \;\;
 \mbox{and}\;\;
 e_3 = - \frac{i}{2} \widetilde F\sigma_3\widetilde F^{-1},
\end{equation}
 see for example \cite{DIS}. Without loss of generality, at some base point 
 $(x_*, y_*) \in \R^2$, we have $\tilde F(x_*, y_*)=\id$.
 Then there exists a family of frames $F$ parametrized by $\l \in \R_+:=
\{ r \in \R \;|\; r>0\}$ 
 satisfying  the following system of partial differential equations:
\begin{equation}\label{eq:Laxpair}
 F_x = F U \;\;\mbox{and} \;\;F_y = F V,
\end{equation}
 where 
\begin{equation}\label{eq:movingUV}
 U= 
\frac{i}{2}
 \begin{pmatrix}
- u_x   &  \l \\
  \l    & u_x
 \end{pmatrix}, \;\;
 V= -\frac{i}{2} 
 \begin{pmatrix}
 0  &  \l^{-1} e^{i u} \\
 \l^{-1} e^{-i u}     & 0
 \end{pmatrix}.
\end{equation}
 The parameter $\lambda \in \R_{+}$ will be called the {\it spectral parameter}.
 We choose $F$ such that 
\eq{
F|_{\l=1} = \widetilde F \;\;\mbox{and}\;\;
F|_{(x_*, y_*)}  = \id.
}
 The compatibility condition of the system in \eqref{eq:Laxpair}, 
 that is $U_y -V_x +[V, U] =0$, 
 becomes a version of the sine-Gordon  equation:
\begin{equation}\label{eq:sine-Gordon}
 u_{x y} - \sin u =0.
\end{equation}
 It turns out that the sine-Gordon equation is the Gauss-Codazzi equations for 
 {\ps} surfaces. Thus from the fundamental theorem of surface theory
 there exists a family of {\ps} surfaces 
 parametrized by the spectral parameter $\l \in \R_+$, 
 see \cite{DIS} for a clear explanation of it.

 Then it is easy to see that 
 $F$ defined in \eqref{eq:Laxpair} together with the condition 
 $F|_{(x_*, y_*)}  = \id$ is an element in the {\it twisted $\SU$-loop group}:
\begin{equation}\label{eq:SUloop}
\LSU := \left\{ g : \R^{\times} \cup S^1 \to \SL\;|\; 
\mbox{$g$ is smooth, $g(\lambda) = \overline{g(\bar \lambda)}^{t -1}$ and $\sigma g(\lambda) = g(-\l)$}\right\},
\end{equation} 
 where $\sigma X = \ad (\sigma_3) X = \sigma_3 X \sigma_3^{-1}, 
 (X \in \SL)$  is an involution on $\SL$. 
 In order to make the above 
 group a Banach Lie group, we restrict the occurring matrix coefficients to 
 the Wiener algebra $\mathcal A =\{ f(\l)= \sum_{n \in \Z} f_n \l^n : S^1
\rightarrow \mathbb C \;|\;  \sum_{n \in \Z} |f_n| < \infty \}$.
 Then the Wiener algebra is a Banach algebra relative to the norm $\|f\| = \sum |f_n|$
 and the loop group $\LSU$ is a Banach Lie group, \cite{Gohberg}.

 Then the family of frames 
 $F$ will be called the {\it extended frame} for $f$. 
 We note that in fact the extended frame $F$ of a smooth {\ps} surface
 is an element in the subgroup of $\LSU$, consisting of loops which extend to 
 $\C^{\times}:= \C \setminus \{0\}$ 
 as analytic functions of $\lambda$.
 
 From the extended frame $F$, a family of {\ps} surfaces $f^{\lambda},\;
 (\l \in \R_+)$ 
 is given by the so-called {\it Sym formula}, \cite{Sym}:
 \begin{equation}\label{eq:Symformula}
 f^{\lambda} = \lambda \left.  \frac{\partial F}{\partial \lambda} 
 F^{-1}\right|_{\l  \in \R_{+}}.
 \end{equation}
\begin{Remark}
\mbox{}
\begin{enumerate}
\item The immersion $f^{\l}|_{\l=1}$ is the original {\ps} surface $f$
 up to rigid motion. The one-parameter family $\{f^{\l}\}_{\l \in \R_{+}}$ 
 will be called the {\it associated family} of $f$.
\item It is known that a surface of negative Gauss curvature 
 is a {\ps} surface if and only if the unit normal 
 of the surface (Gauss map) is Lorentz harmonic. Note that the 
 Lorentz structure is induced by the second fundamental form of a surface. 
\end{enumerate}
\end{Remark}

\subsection{Nonlinear d'Alembert formula}
 In this subsection, we recall the nonlinear d'Alembert formula for smooth 
 {\ps} surfaces in \cite{Krichever, Toda, DIS}. 
 Let $\D^{+}$ and $\D^{-}$ be the interior of the unit disk in 
 the complex plane and the union of the exterior of the unit disk in the complex plane 
 and infinity, respectively. 
 We first define two subgroups 
 of $\LSU$:
\begin{eqnarray}
 \LSUP  =\left\{ g \in\LSU \;|\;\mbox{$g$ can be analytically extend 
 to $\D^+$} \right\}, \\
 \LSUN  =\left\{ g \in\LSU \;|\;\mbox{$g$ can be analytically 
 extend to $\D^{-}$} \right\}.
\end{eqnarray}
 Then $\LSUPN$ and $\LSUNN$ denote subgroups 
 of $\LSUP$ and $\LSUN$ normalized at $\l =0$ and $\l = \infty$, respectively:
\eq{
 \LSUPN =\left\{ g \in \LSUP \;|\; g(\l =0) = \id \right\} 
 \;\;\mbox{and}\;\;
 \LSUNN =\left\{ g \in \LSUN \;|\; g(\l =\infty) = \id \right\}. 
}
 The following decomposition theorem is fundamental.
\begin{Theorem}[Birkhoff decomposition, \cite{Gohberg, Brander}]
 \label{thm:Birkhoff}
 The multiplication maps 
\eq{
 \LSUPN \times \LSUN \to \LSU \;\;\mbox{and}\;\; 
 \LSUNN \times \LSUP \to \LSU
}
 are diffeomorphisms onto $\LSU$, respectively.
\end{Theorem}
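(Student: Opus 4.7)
The plan is to establish this as a Lie-group version of the classical matrix factorization theorem of Gohberg, adapted to the twisted reality condition cutting out $\LSU$ inside $\LSLC$. I treat the first multiplication map $\LSUPN \times \LSUN \to \LSU$ in detail; the second is entirely analogous, with the roles of $\l = 0$ and $\l = \infty$ swapped.

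First I would verify the infinitesimal statement at the identity. At the level of Wiener-algebra Lie algebras, every $X \in \lsu$ has a unique Fourier decomposition $X(\l) = \sum_{n \in \Z} X_n \l^n$, and the twisting $\sigma X(\l) = X(-\l)$ forces $X_n$ to lie in the diagonal (resp.\ off-diagonal) part of $\sl$ for $n$ even (resp.\ odd). Splitting by sign of $n$ (placing the constant mode on the negative side) yields $\lsu = \lsu_+^{*} \oplus \lsu_-$ as a topological direct sum, where $\lsu_+^{*}$ and $\lsu_-$ are the Lie algebras of $\LSUPN$ and $\LSUN$ respectively. The derivative of the multiplication $(F_+, G_-) \mapsto F_+ G_-$ at $(\id, \id)$ is then the topological isomorphism $(X_+, X_-) \mapsto X_+ + X_-$, so by the inverse function theorem the map is a local diffeomorphism near $(\id, \id)$.

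Next I would globalize. Openness of the image is immediate from left and right translation by elements of the factor groups, which turn the local diffeomorphism at $(\id, \id)$ into one at any point in the image. The harder part is surjectivity. This is the content of Gohberg's factorization theorem: a matrix loop in the Wiener algebra admits a Birkhoff factorization precisely when the partial indices of the associated Toeplitz operator vanish, and for loops into the compact simply connected group $\SU$ the determinantal winding number is zero and all partial indices are forced to vanish. I would either invoke this theorem directly from \cite{Gohberg, Brander}, or deduce it from the corresponding factorization for $\LSLC$ and then use the reality and twisting conditions to force the factors into $\LSUPN$ and $\LSUN$.

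Uniqueness, and hence injectivity, follows from Liouville's theorem: if $F_+ G_- = \widetilde F_+ \widetilde G_-$ with both pairs in the prescribed factor groups, then $\widetilde F_+^{-1} F_+ = \widetilde G_- G_-^{-1}$ extends to an $\SL$-valued holomorphic function on the Riemann sphere, hence is constant, and the normalization at $\l = 0$ forces this constant to be $\id$. Smoothness of the inverse then follows from the local inverse function theorem combined with the translation argument used for openness. The main obstacle is the global surjectivity step: without the compactness and simple-connectedness of $\SU$ one would only obtain an open dense big cell (as happens for $\LSLC$, where the Birkhoff strata are labeled by integer partitions), and the passage from a local to a global decomposition on all of $\LSU$ genuinely requires the Fredholm-index input hidden inside the cited references.
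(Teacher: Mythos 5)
The paper itself offers no proof of this theorem: it is imported from \cite{Gohberg, Brander}, and the remark following it in the paper records exactly the division of labor you identify, namely that the Birkhoff decomposition of a complex loop group is in general only a diffeomorphism onto an open dense big cell, and that the global statement for this real form is Brander's theorem. Your scaffold around the citation is sound and standard: the Fourier-mode splitting $X = X_+ + X_-$ (strictly positive modes versus modes $\leq 0$, with the twisting parity you state) is a topological direct sum in the Wiener norm, so the inverse function theorem gives a local diffeomorphism at $(\id,\id)$; translations propagate this over the image; and the Liouville argument for uniqueness, with the constant pinned down by the normalization $g(0)=\id$ of $\LSUPN$, is exactly right. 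To the extent that you, like the paper, ultimately invoke \cite{Gohberg, Brander} for surjectivity, the proposal is acceptable.

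However, the one place where you supply your own reason for the decisive step is wrong, on two counts. First, elements of $\LSU$ as defined in \eqref{eq:SUloop} are \emph{not} loops into the compact group: the reality condition $g(\l) = \overline{g(\bar \l)}^{t\,-1}$ imposes unitarity on $\R^{\times}$, while on $S^1$ the values are merely in $\SL$. Second, even for genuinely unitary loops on $S^1$, vanishing of the determinantal winding number forces only the \emph{sum} of the partial indices to vanish, not each index: the twisted loop $\di(\l^2, \l^{-2})$ takes values in $\SU$ on $S^1$, has determinant identically $1$, yet has partial indices $(2,-2)$ and lies outside the big cell. (It fails the reality condition of \eqref{eq:SUloop} --- it is not unitary for real $\l \neq \pm 1$ --- which is precisely why it does not contradict the theorem, and why the reality condition, not compactness of the fiber, must carry the argument.) Indeed your criterion proves too much: every loop in $\LSLC$ has determinant $1$ and hence zero winding, yet Birkhoff there is only generically solvable. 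The correct mechanism, which is the content of \cite{Brander}, exploits that the anti-holomorphic involution defining this real form is built from $\l \mapsto \bar\l$, which preserves $\D^{+}$ and $\D^{-}$ separately and hence preserves the subgroups $\LSUP$ and $\LSUN$; one then factorizes in $\LSLC$ and shows, using the structure of the real form fixed by this involution, that the middle term of the decomposition is necessarily trivial. You appear to have conflated this with the global Iwasawa decomposition for loops unitary on $S^1$, where the involution \emph{swaps} $\D^{+}$ and $\D^{-}$; there compactness does the work, but that is a different decomposition. As written, your surjectivity paragraph would fail if detached from the citation; with the citation retained, the proof stands, but the parenthetical justification should be deleted or replaced by the real-form argument just described.
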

 We note that the Birkhoff decomposition is in general 
 only diffeomorphism onto the open dense subset. However, 
 in \cite{Brander}, it was proved that if the group is compact semisimple 
 then the diffeomorphism extends to the everything.

 From now on, for simplicity, we assume that the base point is $(x_*, y_*) =(0, 0)$ and 
 the extended frame $F$ at the base point is identity:
\eq{
 F(0, 0, \lambda) = \id.
}
 The nonlinear d'Alembert formula for {\ps} surfaces is
 summarized as follows.
\begin{Theorem}[\cite{Toda, DIS}]\label{thm:dAlembert}
 Let $F$ be the extended frame for a {\ps} surface $f$ in $\E^3$.
 Moreover, let $F= F_{+} F_{-}$ and $F= G_{-} G_{+}$ be the Birkhoff 
 decompositions given in Theorem \ref{thm:Birkhoff}, respectively.
 Then $F_{+}$ and $G_{-}$ do not depend on $y$ and $x$, respectively, and 
 the Maurer-Cartan forms of $F_{+}$ and $G_{-}$ are given as follows$:$
\begin{equation}\label{eq:potential} 
\left\{
\begin{array}{l}
 \xi_+ = \displaystyle F_{+}^{-1} d F_{+} 
 =\frac{i}{2} \l 
\begin{pmatrix}
 0 & e^{-i \alpha(x)} \\
 e^{i \alpha(x)} & 0
\end{pmatrix} dx, \\[0.3cm]
\xi_- = G_{-}^{-1} d G_{-} 
 =\displaystyle
 -\frac{i}{2}\l^{-1} 
\begin{pmatrix}
 0 & e^{i \beta(y)} \\
 e^{-i \beta(y)} & 0
 \end{pmatrix}dy,
\end{array}
\right.
\end{equation}
 where, using the angle function $u(x, y)$, $\alpha$ and $\beta$ are given by
\eq{
 \alpha(x) = u(x, 0)-u(0, 0)\;\;\mbox{and}\;\;\beta(y) 
 = u(0, y).
}
 Conversely, let $\xi_{\pm}$ be a pair of $1$-forms defined in \eqref{eq:potential}
 with functions $\alpha(x)$ and $\beta(y)$ satisfying $\alpha(0)=0$.
 Moreover, let $F_{+}$ and $G_{-}$ be solutions of 
 the pair of following ordinary differential equations$:$
 \begin{equation*}
\left\{
\begin{array}{l}
 d F_{+} = F_{+} \xi_{+}, \\
 d G_{-} = G_{-} \xi_{-},
\end{array}
\right.
\end{equation*}
 with $F_{+} (x =0, \l) = G_{-}(y =0, \l) = \id$.
 Moreover let $D = \di (e^{-\frac{i}{2} \alpha}, e^{\frac{i}{2} \alpha})$ and 
 decompose $(F_{+}D)^{-1} G_{-}$ by the Birkhoff decomposition 
 in Theorem \ref{thm:Birkhoff}$:$
\eq{
(F_{+}D)^{-1} G_{-} = V_{-} V_{+}^{-1},
}
 where $V_{-} \in \LSUNN$ and $V_{+} \in \LSUP$.
 Then $F = G_{-} V_{+} = F_{+} DV_{-}$ is the extended frame of some {\ps} surface in $\E^3$.
\end{Theorem}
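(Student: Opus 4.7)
\emph{Strategy.} The strategy in both directions is a rigidity argument for $\lambda$-dependence: a matrix-valued function that is analytic on both $\D^+$ and $\D^-$ with prescribed pole orders is a Laurent polynomial in $\lambda$. Birkhoff uniqueness then couples this rigidity to the $(x,y)$-dependence.

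\emph{Forward direction.} From $F = F_+F_-$ I would derive the transport identity
\begin{equation*}
 F_+^{-1}\,dF_+ \;=\; \ad(F_-)\bigl(F^{-1}\,dF\bigr) - (dF_-)F_-^{-1}.
\end{equation*}
The left side is analytic on $\D^+$ and vanishes at $\lambda = 0$ because $F_+|_{\lambda=0} = \id$. For the $\partial_y$ component, the right side $F_-VF_-^{-1} - (\partial_y F_-)F_-^{-1}$ is analytic on $\D^-$ since the only $\lambda$-pole of $V$ lies at $0 \notin \D^-$. Pasting across $S^1$ produces a holomorphic map on $\C P^1$ vanishing at $\lambda=0$, which must be identically zero; hence $\partial_y F_+ = 0$. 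A mirror argument shows $\partial_x G_- = 0$. For the $\partial_x$ component, the right side has at worst a simple pole at $\infty$ coming from the $\lambda$-term in $U$, so $F_+^{-1}\partial_x F_+ = \lambda B(x)$; the twist condition forces $B(x)$ to be off-diagonal, and reading the $\lambda^1$-coefficient at $\infty$ shows $B(x) = \tfrac{i}{2}\ad(F_-(x,0,\infty))\sigma_1$, so $B(x)$ lies in the $\SU$-adjoint orbit of $\tfrac{i}{2}\sigma_1$. Combined with the off-diagonal constraint this gives $B(x) = \tfrac{i}{2}\bigl(\begin{smallmatrix}0 & e^{-i\alpha(x)}\\ e^{i\alpha(x)} & 0\end{smallmatrix}\bigr)$ for some real $\alpha$. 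Birkhoff uniqueness applied to $F(0,0,\cdot)=\id$ forces $F_+(0,\cdot)=\id$, giving $\alpha(0) = 0$, and direct evaluation of $F_-(x,0,\infty)$ using \eqref{eq:Darbouxframe} identifies $\alpha(x) = u(x,0) - u(0,0)$. The derivation of $\xi_-$ is symmetric.

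\emph{Converse direction.} Solve the two ODEs for $F_+(x,\lambda) \in \LSUPN$ and $G_-(y,\lambda) \in \LSUNN$, and apply Theorem \ref{thm:Birkhoff} to the loop $(F_+D)^{-1}G_- = V_-V_+^{-1}$, which yields $G_-V_+ = F_+DV_-$, so $F$ is unambiguously defined. A gauge calculation using $D^{-1}dD = -\tfrac{i\alpha'(x)}{2}\sigma_3\,dx$ and $D^{-1}\bigl(\begin{smallmatrix}0 & e^{-i\alpha}\\ e^{i\alpha} & 0\end{smallmatrix}\bigr)D = \sigma_1$ shows $(F_+D)^{-1}d(F_+D)$ equals the $U$-form of \eqref{eq:movingUV} under $u_x \mapsto \alpha'(x)$. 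Computing $F^{-1}\partial_x F$ from both $F = F_+DV_-$ and $F = G_-V_+$, each representation exhibits analyticity on one side of $S^1$ with only the expected pole on the other, so $F^{-1}\partial_x F$ is linear in $\lambda$; the normalizations $V_-(\infty)=\id$ and $V_+(0) \in \LSUP$ then force it into the $U$-shape of \eqref{eq:movingUV} for some $u(x,y)$ with $u(x,0)-u(0,0) = \alpha(x)$. Swapping roles gives the $V$-shape for $F^{-1}\partial_y F$ with $u(0,y) = \beta(y)$, and flatness of $F^{-1}dF$ yields \eqref{eq:sine-Gordon}. The hardest step is checking that the two functions $u$ extracted from the $x$- and $y$-derivatives coincide; this is ultimately forced by Birkhoff uniqueness, which couples $V_+$ and $V_-$ into a single element of $\LSU$ and thereby welds the half-potentials $\xi_+(x)$ and $\xi_-(y)$ into a genuine sine-Gordon frame.
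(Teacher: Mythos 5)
Your overall scheme is the right one, and it is essentially the scheme this paper itself uses: the smooth theorem is quoted from \cite{Toda, DIS} without proof, but the paper's proofs of the discrete analogues (Theorems \ref{thm:potential} and \ref{thm:converseconstruction}) follow exactly your pattern --- pasting across $S^1$ to show $F_+^{-1}(dF_+)$ has no $y$-component and is linear in $\l$, then Birkhoff uniqueness in the converse. Your converse direction is sound in outline and matches the paper's discrete argument: introducing $h$ via $V_+(\l=0)=\di(e^{\frac{i}{2}h},e^{-\frac{i}{2}h})$, computing $F^{-1}F_x$ and $F^{-1}F_y$ from the two representations $F=F_+DV_-=G_-V_+$, and recovering the boundary conditions on $u$ by evaluating the Birkhoff decomposition at $y=0$ and $x=0$ (where $V_-|_{y=0}=\id$, $V_+|_{y=0}=F_+D$, and $V_-|_{x=0}=G_-$, $V_+|_{x=0}=\id$).

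The genuine gap is in the forward direction, at the identification $\alpha(x)=u(x,0)-u(0,0)$. You correctly reduce the problem to computing the diagonal phase of $F_-(x,0,\infty)$, but the proposed step --- ``direct evaluation of $F_-(x,0,\infty)$ using \eqref{eq:Darbouxframe}'' --- does not work: \eqref{eq:Darbouxframe} only determines the frame at $\l=1$, whereas $F_-(\cdot,\cdot,\infty)$ is a Birkhoff factor whose value at $\l=\infty$ is not determined pointwise by $F$ at any particular $\l$. Nor can you read it off by matching the $\l^0$-coefficient in $F_+^{-1}\partial_xF_+ = F_-UF_-^{-1}-(\partial_xF_-)F_-^{-1}$: writing $F_-=F_-(\infty)(\id+\l^{-1}C_1+\cdots)$, that coefficient contains the commutator $[C_1,\tfrac{i}{2}\sigma_1]$, which is diagonal (a bracket of off-diagonal matrices) and so contaminates exactly the entries you need. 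The missing lemma --- the step the paper's discrete proof supplies via the auxiliary frame $\tilde F_+$ --- is that $F_-|_{y=0}$ is $\l$-independent: along $y=0$ the coefficient $U(x,0,\l)$ is polynomial in $\l$, so the solution $F|_{y=0}$ of $\partial_xF=FU$ with $F(0,0)=\id$ lies in $\LSUP$, whence $F_-|_{y=0}=F_+^{-1}F|_{y=0}\in\LSUP\cap\LSUN$ is constant in $\l$ (and diagonal by the twisting). Then $C_1|_{y=0}=0$, the diagonal matching cleanly gives $f'(x)=-u_x(x,0)$ with $f(0)=0$, and $\alpha=-f=u(x,0)-u(0,0)$. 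Note also that the derivation of $\beta$ is not literally ``symmetric'': along $x=0$ one shows $G_+|_{x=0}=\id$ exactly (both $G_-$ and $F|_{x=0}$ are normalized to $\id$ at $\l=\infty$), which is why $\beta(y)=u(0,y)$ carries no subtracted constant, in contrast to $\alpha$.
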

 The pair of $1$-forms $(\xi_+, \xi_-)$ in \eqref{eq:potential}
 will be called the {\it pair of normalized potentials}.

 \begin{Remark}
 In \cite{DIS}, it was shown that the extended frames of {\ps} surfaces 
 can be also constructed from the following pair of $1$-forms:
 \begin{equation}\label{eq:genepot}
\eta^x = \sum_{j = - \infty}^{1} \eta^x_{j} \lambda^{j} dx
 \;\;\mbox{and}\;\;
\eta^y = \sum_{j = - 1}^{\infty} \eta^y_{ j} \lambda^{j} dy, 
\end{equation}
 where $\eta_j^x$ and  $\eta_j^y$ take values in $\su$, 
 and each entry of $\eta_j^x$ (resp. $\eta_j^y$) is smooth on $x$ (resp. $y$), 
 and $\det \eta^x_1 \neq 0,  \det \eta^x_{-1} \neq 0$. Moreover $\eta_j^x$ 
 and $\eta_j^y$ are diagonal (resp. off-diagonal) if $j$ is even (resp. odd).
 This pair of $1$-forms $(\eta^x, \eta^y)$ is a generalization 
 of the normalized potentials $(\xi_+, \xi_-)$ in \eqref{eq:potential} 
 and will be called the {\it pair of generalized potentials}, see also \cite{BIK}.
\end{Remark}
\subsection{Discrete pseudospherical surfaces}
 It is known that a discrete analogue of {\ps} surfaces and its 
 loop group formulation was defined in \cite{BP}. 
 In this subsection, instead of the smooth coordinates 
 $(x, y) \in \R^2$, 
 we use the quadrilateral lattice $(n, m) \in \Z^2$, that is,
 all functions depend on the lattice $(n, m) \in \Z^2$.
 The subscripts $1$ and $2$ (resp. $\bar 1$ and $\bar 2$)
 denote the forward (resp. backward) lattice points with 
 respect to $n$ and $m$: For a function $f(n, m)$ of the lattice 
 $(n, m) \in \Z^2$, we define $f_1, f_2, f_{\bar 1}$ and $f_{\bar 2}$  by
\eq{
 f_{1} = f(n+ 1, m), \; f_{\bar 1} = f(n- 1, m),\;
 f_{2} = f(n, m+ 1) \;\;\mbox{and}\;\; f_{\bar 2} = f(n, m-1).
}
 The {\it discrete extended frame} $F$ of a discrete {\ps} surface 
 can be defined by  the following partial difference system, 
 see \cite[Section 3.2]{BP:surv} and 
 \cite{BP} 
 \footnote{Since the identification of $\E^3$ and $\su$ is different in \cite{BP:surv}, 
 the matrices $\mathcal U$ and $\mathcal V$ of $(3.22)$ and $(3.23)$ in \cite{BP:surv}
 and our $U, V$ are related by 
 $U = \ad(\offdi(1, 1))\mathcal {}^t\mathcal U$ and $V = \ad(\offdi(1, -1)){}^t \mathcal V$ 
 with $h_{n, m} = u/2, 
 p_n = p/2, q_m = q/2$. On the one hand if we normalize $p=q= \delta$, then $U$ and $V$
 are the same as $\Omega_{n,m}$ and $\Theta_{n, m}$ in \cite[Theorem 3.1]{PW}. }:
\begin{equation}\label{eq:DLaxpair}
 F_1 = F U \;\;\mbox{and} \;\;F_2 = F V,
\end{equation}
 where 
\begin{equation}\label{eq:DmovingUV}
 U = 
\frac{1}{\Delta_{+}}
 \begin{pmatrix}
 e^{-\frac{i}{2} (u_1 -u)}  &  \frac{i}{2}p \lambda  \\
 \frac{i}{2} p \lambda     &  e^{\frac{i}{2}(u_1 -u)}
 \end{pmatrix},  \;\;
 V= \frac{1}{\Delta_{-}} 
 \begin{pmatrix}
 1  &  -\frac{i}{2} q e^{\frac{i}{2}(u_2 + u)} \l^{-1}\\
- \frac{i}{2} q e^{-\frac{i}{2} (u_2 + u)} \l^{-1}& 1
 \end{pmatrix},
\end{equation}
 with $\Delta_+ = \sqrt{1 + (p/2)^2 \l^2}$ and 
 $\Delta_- = \sqrt{1 + (q/2)^2 \l^{-2}}$. Here 
 $u$ is a real function depending on both $n$ and $m$, and 
 $p\neq 0$ and $q\neq 0$ are real functions depending only on $n$ and $m$, 
 respectively: 
\eq{
 u = u(n, m), \;\;
 p = p(n), \;\;\mbox{and}\;\;
 q = q(m).
 }
 The compatibility condition of the system in \eqref{eq:DLaxpair},
 that is $V U_2 = UV_1$, 
 gives the so-called {\it discrete sine-Gordon equation}:
\begin{equation}\label{eq:dsineGordon}
 \sin \left( \frac{u_{12}-u_1-u_2+u}{4}\right)
 = \frac{p q}{4} \sin \left( \frac{u_{12}+u_1 + u_2+u}{4}\right).
\end{equation}
 The equation \eqref{eq:dsineGordon} was first found by Hirota in \cite{Hirota}
 and also called the {\it Hirota equation}.
 Then a discrete {\ps} surface $f$ can be given by the so-called Sym formula, 
 \cite{BP}:
 \begin{equation}\label{eq:dSymformula}
 f^{\lambda} = \lambda \left.  \frac{\partial F}{\partial \lambda} 
 F^{-1}\right|_{\l  \in \R_{+}}.
 \end{equation}
 The family of frames $F$
 defined by \eqref{eq:DLaxpair} with $F|_{(n_*, m_*)} = \id$ will be called 
 the {\it discrete extended frame} for a discrete {\ps} surface.
\begin{Remark}\label{Rm:DPS}
 The geometric characterization of a discrete {\ps} surface $f$
 was also given in \cite{BP, BP:surv}.
 A discrete surface $f$ is a discrete {\ps} surface if and only if
 the following two conditions hold:
 \begin{enumerate}
\item For each point $f$, there is a plane $P$ such that 
\eq{
 f, \; f_1, \; f_{\bar 1}, \; f_2, \; f_{\bar 2} \in P.
} 
\item The length of the opposite edge of an elementary quadrilateral is equal:
\eq{
 |f_1 - f| = |f_{1 2} -f_2| = a(n) \neq 0, \;\;
 |f_2 - f| = |f_{1 2} -f_1| = b(m) \neq 0.
} 
\end{enumerate}
 It is easy to see that the map $f^{\lambda}$ 
 defined in \eqref{eq:dSymformula} has these properties and $f^{\l}$ gives 
 a family of discrete {\ps} surfaces, see \cite[Theorem 3]{BP:surv}.
\end{Remark}
 We now introduce a discrete analogue of the smooth angle function between all edges 
 at point $(n, m) \in \Z^2$
 as follows, see \cite{BP}:
\eq{
 \phi^{(1)} &\equiv  - \frac{u_2}{2}-\frac{u_1}{2}, \;\;\;\; \phi^{(2)} \equiv \frac{u_{\bar 1}}{2} + \frac{u_{2}}{2},\\
  \phi^{(3)} & \equiv  - \frac{u_{\bar 2}}{2} - \frac{u_{\bar 1}}{2},\;\;\;\;
  \phi^{(4)} \equiv \frac{u_{1}}{2} + \frac{u_{\bar 2}}{2},
}
 where $\equiv$ means that the equalities hold modulo $2 \pi$.
 Then from the symmetry of the quadrilateral and the fact that the sum of angles 
 around a vertex is equal to $2 \pi$ by (1) in Remark \ref{Rm:DPS}, 
 we have
\eq{
 \phi^{(1)} = \phi^{(3)}_{12},\; \phi^{(2)}_{1} = \phi^{(4)}_{2} \;\;\mbox{and}\;\;
 \phi^{(1)} +\phi^{(2)}+ \phi^{(3)} +\phi^{(4)} \equiv 0.
 }
 Moreover, 
 setting $Q =\exp (i \phi^{(1)})$ and $k = p q/4$, the discrete sine-Gordon equation 
 \eqref{eq:dsineGordon} can be rephrased as 
\begin{equation}\label{eq:danothersine-Gordon}
 Q_{12} Q =\frac{Q_2-k_2}{1-k_2 Q_2} \frac{Q_{1} - k_1}{1-k_1 Q_1},
\end{equation}
 see for example \cite[Section 3.4]{BP:surv}.
 We note that the other discrete angle functions $\exp (i \phi^{(j)}) \;(j \in \{2, 3, 4\})$ 
 also satisfy the same equation \eqref{eq:danothersine-Gordon}.
\section{Nonlinear d'Alembert formula 
 for discrete {\ps} surfaces}\label{sc:PS}
 In this section, we derive a discrete version of nonlinear d'Alembert formula 
 for discrete {\ps} surfaces. 

\subsection{Nonlinear d'Alembert formula}\label{subsc:DDAlembert}
 From now on we assume that the base point is $(n_*, m_*)=(0, 0)$
 and the discrete extended frame $F$ at the base point is identity:
\eq{
 F(0, 0, \lambda) = \id.
}
 Moreover, we also assume that the functions $p$ and $q$ in \eqref{eq:DmovingUV}
 satisfy the inequalities
\begin{equation}\label{eq:conditions}
 0 < \left|\frac{p}{2}\right|<1 \;\;\mbox{and}\;\;0<\left|\frac{q}{2}\right|<1.
\end{equation}
 These inequalities imply that the zeros of $\Delta_+$ and $\Delta_{-}$ are 
 in the exterior and the interior of the unit disk, respectively.
 Moreover it is easy to see that zeros of $\Delta_{\pm}$ are not on $\R\cup \{\infty\}$.
 Then the discrete extended frame $F$ takes values in the twisted loop group $\LSU$.

 The following theorem gives a pair of matrices, which 
 depend only on one lattice variables and it is a discrete version 
 of Theorem \ref{thm:dAlembert}.
 \begin{Theorem}\label{thm:potential}
 Let $f$ be a discrete {\ps} surface and $F$ the corresponding 
 discrete extended frame. Decompose $F$ according 
 to the Birkhoff decomposition in Theorem \ref{thm:Birkhoff}$:$
\eq{
 F= F_{+} F_{-} = G_{-} G_{+},
}
 where $F_{+} \in \LSUPN, F_{-} \in \LSUN, G_- \in \LSUNN$ 
 and $G_+ \in \LSUP$.
 Then $F_{+}$ and $G_-$ do not depend on $m \in \Z$ and 
 $n \in \Z$, 
 respectively, and the discrete Maurer-Cartan forms 
 of $F_+$ and $G_-$ are given 
 as follows:
\begin{equation}\label{eq:discretepotentials}
\left\{
\begin{array}{l}
\displaystyle
 \xi_{+} = F_{+}^{-1} (F_{+})_1 = 
 \frac{1}{ \Delta_{+}}
\begin{pmatrix}
1 & \frac{i}{2} p e^{-i \alpha}\l \\
\frac{i}{2} p e^{i \alpha} \l &1 
\end{pmatrix}, \\[0.3cm]
\displaystyle
 \xi_{-} = G_{-}^{-1} (G_{-})_2 = 
 \frac{1}{ \Delta_-}
\begin{pmatrix}
1 &  -\frac{i}{2} q e^{i \beta}\l^{-1} \\
-\frac{i}{2} q e^{-i \beta}\l^{-1} &1
\end{pmatrix},
\end{array}
\right.
\end{equation}
 where $\Delta_{+} = \sqrt{1 + (p/2)^2 \l^2}$ 
 and $\Delta_{-} = \sqrt{1 + (q/2)^2 \l^{-2}}$, 
 the functions $p$ and $q$ are given in \eqref{eq:DmovingUV},
 and $\alpha$ and $\beta$ are functions of $n \in \Z$ and $m \in \Z$, 
 respectively.
 Moreover using the function $u(n, m)$ in \eqref{eq:DmovingUV}, 
 $\alpha(n) $ and $\beta(m)$ are given by
 \begin{equation}\label{eq:potfunctions}
\left\{
\begin{array}{l}
 \alpha(n) =\frac{1}{2} u(n+1, 0)+\frac{1}{2}u(n, 0)- u(0, 0) , \\[0.1cm]
 \beta(m) = \frac{1}{2}u(0,m+1)+\frac{1}{2}u(0, m).
\end{array}
\right.
 \end{equation}
\end{Theorem}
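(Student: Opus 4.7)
The plan is to prove the theorem in two stages: first establish the independence claims ($F_+$ is independent of $m$ and $G_-$ of $n$), then compute the explicit Maurer--Cartan-type forms. For the independence of $F_+$, I would combine the equation $F_2 = FV$ with the decomposition $F = F_+ F_-$ to derive the key identity
\[
F_+^{-1}(F_+)_2 = F_- V (F_-)_2^{-1}.
\]
The left-hand side lies in $\LSUP$. For the right-hand side I would write $V = \tilde V/\Delta_-$, where $\tilde V$ is a polynomial of degree one in $\lambda^{-1}$ (hence analytic on $\D^-$), and $1/\Delta_-$ is analytic on $\D^-$ because the hypothesis $|q/2|<1$ places the zeros of $\Delta_-$ inside the unit disk; thus the right-hand side lies in $\LSUN$. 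Birkhoff uniqueness together with the twisted loop constraint reduces $\LSUP \cap \LSUN$ to constant diagonal matrices, and evaluation at $\lambda = 0$ (using $F_+ \in \LSUPN$) forces this constant to be $\id$. The dual argument using $F_1 = FU$ and $F = G_- G_+$ yields $n$-independence of $G_-$.

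For the explicit form of $\xi_+$, the analogous manipulation gives $\Delta_+ \xi_+ = F_- \tilde U (F_-)_1^{-1}$ with $\tilde U$ polynomial in $\lambda$ of degree one. Comparing Laurent expansions on $S^1$: the left-hand side (a product in $\LSUP$) has only non-negative $\lambda$-powers, whereas the right-hand side (with $F_-,(F_-)_1^{-1}$ contributing non-positive powers and $\tilde U$ contributing powers in $\{0,1\}$) has $\lambda$-degree at most one. Hence $\Delta_+\xi_+$ is polynomial in $\lambda$ of degree $\le 1$, and the twisted loop constraint dictates a diagonal constant term and an off-diagonal linear term. The constant term is $\id$ (from $\xi_+|_{\lambda=0}=\id$), and the linear coefficient, read off as $\lambda\to\infty$ from the right-hand side, equals $D(n) B D(n+1)^{-1}$, where $D(n) = F_-(n,m,\infty)$ is a diagonal $\SU$-matrix (by twisting and the $m$-independence from stage one) and $B$ is the leading coefficient of $\tilde U$.

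The main obstacle is the explicit identification of $D(n)$, for which I would invoke the \emph{other} Birkhoff decomposition. Since $G_-$ is $n$-independent and equals $\id$ at the base point, $G_-(n,0)=\id$, so $F(n,0) = G_+(n,0) \in \LSUP$. Combined with $F(n,0) = F_+(n,0) F_-(n,0)$, this forces $F_-(n,0) = F_+(n,0)^{-1} F(n,0) \in \LSUP \cap \LSUN$, which is $\lambda$-constant; hence $D(n) = F_-(n,0,0) = F(n,0,0)$. The latter is the telescoping product $\prod_{k=0}^{n-1} U(k,0,0)$ of diagonal matrices, evaluating to $\di(e^{-i(u(n,0)-u(0,0))/2},\, e^{i(u(n,0)-u(0,0))/2})$ and yielding the stated $\alpha(n)$. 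The parallel computation for $\xi_-$ uses $F_+(0,m) = \id$ to force $G_+(0,m) \in \LSUP \cap \LSUN$ to be $\lambda$-constant; evaluation at $\lambda = \infty$ gives $G_+(0,m) = \id$ (since each $V(0,l,\infty) = \id$), so $\xi_-(m) = V(0,m)$ directly, producing $\beta(m) = \tfrac{1}{2} u(0,m+1) + \tfrac{1}{2} u(0,m)$ with no $u(0,0)$ offset --- the asymmetry with $\alpha$ reflecting the asymmetric normalization conventions $F_+\in\LSUPN$ versus $G_+\in\LSUP$.
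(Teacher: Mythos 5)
Your proposal is correct, and its first half coincides with the paper's argument: the key identity $F_{+}^{-1}(F_{+})_2 = F_{-} V (F_{-})_2^{-1}$, the observation that the left side lies in $\LSUPN$ while the right side has only non-positive powers of $\l$ (legitimately using $0<|q/2|<1$, resp.\ $0<|p/2|<1$, to place $\Delta_{\mp}$'s zeros off $\D^{\mp}$), and the degree count showing $\Delta_{+}\xi_{+}$ is a degree-one polynomial in $\l$ with diagonal constant term $\id$ and off-diagonal linear term are all exactly as in the paper. Where you genuinely diverge is in pinning down the phases $\alpha$ and $\beta$. The paper introduces auxiliary frames: $\tilde F_{+}$ solving $\tilde F_{+}^{-1}(\tilde F_{+})_1 = U|_{m=0}$ with $\tilde F_{+}(0)=\id$ (and dually $\tilde G_{-}$), and uses uniqueness of solutions of the difference equation to conclude $\tilde F_{+} F_{-}^{-1}|_{m=0} = F_{+}|_{m=0}$, hence $F_{-}|_{m=0} \in \LSUP \cap \LSUN$ is $\l$-constant; it then solves the resulting scalar relation $(f_1-f)|_{m=0} = -(u_1-u)|_{m=0}$ with $f(0,0)=0$. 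You instead play the two Birkhoff factorizations against each other: $n$-independence of $G_{-}$ together with $G_{-}(0,0)=\id$ gives $F(n,0)=G_{+}(n,0)\in\LSUP$, so $F_{-}(n,0)\in\LSUP\cap\LSUN$ is $\l$-constant at once, and dually $F_{+}(0,m)=\id$ forces $G_{+}(0,m)$ to be $\l$-constant, equal to $\id$ after evaluation at $\l=\infty$ (the paper gets $G_{+}|_{n=0}=\id$ instead by noting $\xi_{-}$ and $V$ both lie in $\LSUNN$). Your telescoping evaluation $D(n)=F(n,0,0)=\prod_{k=0}^{n-1}U(k,0,0)=\di\bigl(e^{-\frac{i}{2}(u(n,0)-u(0,0))}, e^{\frac{i}{2}(u(n,0)-u(0,0))}\bigr)$ is the multiplicative counterpart of the paper's solving of the difference equation for $f$, and it yields the same $\alpha(n)$; likewise $\xi_{-}=V|_{n=0}$ gives the stated $\beta(m)$, and your remark on the asymmetric $u(0,0)$ offset is the right explanation. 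The minor point you should make explicit is the $m$-independence of $F_{-}(\l=\infty)$, which follows from evaluating $(F_{-})_2 = F_{-}V$ at $\l=\infty$ where $V=\id$, rather than from twisting alone (twisting only gives diagonality). What your route buys is economy --- no auxiliary frames, with the $\l$-constancy along the axes falling out of the second factorization for free; what the paper's route buys is that it works along each axis using only difference-equation uniqueness within a single factorization. Both arguments are complete and give the same normalized potentials.
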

\begin{proof}
 Let  $F = F_+ F_- = G_- G_+$ be the Birkhoff decomposition
 of the discrete extended frame. 
 Since $F_+$ takes values  in $\LSUPN$,
 $F_{+}^{-1} (F_{+})_2$  also takes values in $\LSUPN$.
 On the one hand, we have  
\eq{
 F_{+}^{-1} (F_{+})_2 = F_- F^{-1} F_2 (F_{-})_2^{-1}
 = F_- V (F_{-})_2^{-1}.
}
 Since $V$ takes values in $\LSUNN$,  $F_- V (F_{-})_2^{-1}$ 
 can be expanded with respect 
 to $\lambda$ as
\eq{
 F_- V (F_{-})_2^{-1} = 
 V_{(0)} + \lambda^{-1} V_{(-1)} + \lambda^{-2}V_{(-2)} 
 + \lambda^{-3} V_{(-3)} + \cdots.
}
 Thus 
 \begin{equation}\label{eq:V0}
 F_{+}^{-1} (F_{+})_2 = \id. 
\end{equation}
 Therefore $F_{+}$ depends only on $n$.
 Similarly $F_{+}^{-1} (F_{+})_1$ takes values in $\LSUPN$ and 
 $F_{+}^{-1} (F_{+})_1 = F_- F^{-1} F_1 (F_{-})_1^{-1}
 = F_- U (F_{-})_1^{-1}$. 
  Multiplying $\Delta_+$ on both sides, we have
 \begin{equation}\label{eq:U0}
 \Delta_+ F_{+}^{-1} (F_{+})_1 = U_{(0)} + \lambda U_{(1)} 
 = F_- (\Delta_+  U) (F_{-})_1^{-1},
\end{equation}
 where we use the expansion $F_- (\Delta_+U) (F_{-})_1^{-1} 
 = \l U_{(1)} + U_{(0)} + \l^{-1} U_{(-1)} + \cdots$. 
 Since $F_+$ takes values in $\LSUPN$, 
 $U_{(0)}=\id$ and $U_{(1)}$ has the off diagonal 
 form with two entries are 
 minus complex conjugate each other and 
 $\det (U_{(0)} + \l U_{(1)}) =\Delta_+^2$. Then
\begin{equation}\label{eq:xiplus} 
  \xi_+ = F_{+}^{-1} (F_{+})_1 = 
 \frac{1}{\Delta_+}
\begin{pmatrix} 
1 &  \ell(n)\l \\
-\overline{\ell (n)} \l &1
\end{pmatrix}, \;\;\mbox{with}\;\;
 |\ell(n)|^2 =p(n)^2.
\end{equation}
 Thus we have the form $\xi_+$ in \eqref{eq:discretepotentials}. 
 We now look the expansion of $F_- (\Delta_+ U) (F_{-})_1^{-1}$ 
 more closely. Since $F_{-} \in \LSUN$, we can set $F_-(\lambda =\infty) 
 = \di (e^{\frac{i}{2} f}, e^{-\frac{i}{2} f})$. Then from \eqref{eq:V0}, we 
 see that 
\eq{
 \frac{f_2}{2} = \frac{f}{2} \;\;(\operatorname{mod} 2\pi).
}
 Then consider \eqref{eq:U0} multiplying $\Delta_+^{-1}$ on 
 both sides and set $m=0$:
\begin{equation}\label{eq:xi+}
 \xi_+ = F_+^{-1} (F_+)_1 = F_+^{-1} (F_+)_1|_{m=0} = F_- U (F_-)_1^{-1}|_{m=0}.
\end{equation}
 Let $\tilde F_+$ be the solution of $\tilde F_+^{-1} 
 (\tilde F_+)_1 =U|_{m=0}$ with $\tilde F_+(n=0) = \id$. It is easy to see 
 that $\tilde F_+$ does not depend on $m \in \Z$ and
 takes values in $\LSUP$. 
 Moreover, $\tilde F_+ F_-^{-1}|_{m=0}$ satisfies that 
 $( \tilde F_+F_-^{-1})^{-1}  (\tilde F_+F_-^{-1})_1 |_{m=0} =
 F_- U (F_-)_1^{-1}|_{m=0}$.
 Since $\tilde F_+ F_-^{-1}|_{(n,m) =(0, 0)} = F_+|_{(n,m) =(0, 0)} = \id$ and 
 by \eqref{eq:xi+}, we have  
\eq{
 \tilde F_+ F_-^{-1}|_{m=0} =  F_+|_{m=0}. 
}
 Thus $F_-|_{m=0} (= F_+^{-1} \tilde F_+ |_{m=0})$  takes values in 
 $\LSUP \cap \LSUN$ and therefore 
 $F_-|_{m=0}$ does not depend on $\lambda$. A straightforward computation 
 shows that 
\eq{
 F_- U (F_-)_1^{-1}|_{m=0} =
\left. 
\frac{1}{\Delta_+}
 \begin{pmatrix} 
e^{-\frac{i}{2}(u_1 -u) -\frac{i}{2}(f_1 -f)}&\frac{i}{2} p e^{\frac{i}{2} (f_1+f)}\l \\ 
 \frac{i}{2}p e^{- \frac{i}{2}(f_1+f)}\l & e^{\frac{i}{2}(u_1 -u) +\frac{i}{2} (f_1 -f)} 
 \end{pmatrix}
 \right|_{m=0}.
}
 Thus comparing this with $\xi_+$ in \eqref{eq:xiplus} 
 and setting $\alpha =-\frac{1}{2}(f_1+ f)|_{m=0}$, 
 we have $-(u_1 -u)|_{m=0} = (f_1 -f)|_{m=0}$ and $\ell = \frac{i}{2}p e^{-i \alpha}$.
 The equation $-(u_1 -u)|_{m=0} = (f_1 -f)|_{m=0}$ can be easily solved to $f$ with 
 the initial condition $f(0, 0) =0$:
\eq{
 f(n,0) = u(0,0)-u(n,0).
}
 Note that $f(0,0) =0$ is satisfied since $F_-(\l = \infty) = \id$.
 Therefore $\alpha$ can be  explicitly computed as 
\eq{
 \alpha(n) = \frac{1}{2}u(n+1, 0) +\frac{1}{2}u(n, 0)-u(0,0).
}
 For the case of $G_{-}$, the verbatim argument shows that 
 $G_-$ does not depend on $n$ and $\xi_- = G_{-}^{-1}(G_{-})_2$ has the 
 following form:
\eq{
 \xi_- = G_{-}^{-1} (G_{-})_2 = \frac{1}{\Delta_-} 
\begin{pmatrix}
 1 & r (m) \l^{-1} \\
 - \overline{r (m)} \l^{-1} &1\\
\end{pmatrix}\;\;\mbox{with}\;\; |r(m)|^2 = q(m)^2.
}
 Thus we have the form $\xi_-$ in \eqref{eq:discretepotentials}. 
 Consider 
 \begin{equation}\label{eq:G-}
 \xi_{-} = G_{-}^{-1} (G_{-})_2|_{n=0} = G_{+} V (G_{+})^{-1}_2|_{n=0}.
 \end{equation}
 Let $\tilde G_-$ be the solution of $\tilde G_-^{-1} 
 (\tilde G_-)_2 =V|_{n=0}$ with $\tilde G_-(m=0) = \id$.
 It is easy to see that $\tilde G_-$ does not depend on $n \in \Z$ 
 and takes values in $\LSUN$. 
 Moreover, $\tilde G_- G_+^{-1}|_{n=0}$ satisfies 
 $G_+ \tilde G_-^{-1}  (\tilde G_-)_2 (G_+)_2^{-1}|_{n=0} =
 G_+ V (G_+)_2^{-1}|_{n=0}$.
 Since $\tilde G_+G_+^{-1}|_{(n,m) =(0, 0)}= G_{-}|_{(n, m)=(0,0)} = \id$ 
 and by \eqref{eq:G-}, we have 
\eq{
 \tilde G_- G_+^{-1}|_{n=0} =  G_-|_{n=0}. 
}
  Therefore $G_+|_{n=0} (=  G_-^{-1} \tilde G_-|_{n=0})$ takes values in 
 $\LSUP \cap \LSUN$ and thus $G_+|_{n=0}$ does not depend on $\lambda$. 
 Moreover, since  $G_{-}^{-1} (G_{-})_2$ and $V$ are in $\LSUNN$ and 
 from \eqref{eq:G-}, 
 we have $G_+|_{n=0} = \id$.
 A straightforward computation shows that 
\eq{
 G_+ V (G_+)_2^{-1}|_{n=0} =
\left. 
\frac{1}{\Delta_-}
 \begin{pmatrix} 
 1 &-\frac{i}{2}q e^{\frac{i}{2}(u_2 +u)}\l^{-1} \\ 
 -\frac{i}{2}q e^{-\frac{i}{2}(u_2+u)}\l^{-1} & 1
 \end{pmatrix}
 \right|_{n=0}.
}
 Thus setting $\beta =\frac{1}{2} (u_2 + u)|_{n=0}$, we have
 $r =-\frac{i}{2}q e^{i \beta}$. Therefore we have 
\eq{
 \beta(m) = \frac{1}{2}u(0,m+1)+\frac{1}{2}u(0, m).
}
 This completes the proof.
\end{proof}
\begin{Definition}
 The pair of matrices $(\xi_{-}, \xi_{+})$ given in \eqref{eq:discretepotentials}
 will be called the
 {\it pair of  discrete normalized potentials}.
\end{Definition}
 Conversely, the pair of discrete normalized potentials give some
 discrete {\ps} surface through the Birkhoff decomposition.
\begin{Theorem}\label{thm:converseconstruction}
 Let $\xi_{\pm}$ be a pair of discrete normalized potentials 
 defined in  \eqref{eq:discretepotentials} with arbitrary 
 functions $\alpha = \alpha(n),  \beta = \beta(m)$ with $\alpha (0) = 0$ 
 and $p = p(n), q = q(m)$ satisfying the conditions \eqref{eq:conditions}.
 Moreover let $F_{+} =F_+(n, \l)$ and $G_{-} =G_-(m, \l) $ be the solutions 
 of the ordinary difference equations
\eq{
 (F_{+})_{1} = F_{+} \xi_{+}\;\;\mbox{and}\;\;
 (G_{-})_{2} = G_{-} \xi_{-},
}
 with $F_{+}(n =0, \l) =G_{-}(m=0, \l)  = \id$ and set a matrix
 $D= \di (e^{\frac{i}{2} k}, e^{-\frac{i}{2} k}) \in {\rm U}_1$, 
 where $k(0) =0$ and $k(n) = 2 \sum_{j=0}^{n-1}(-1)^{j+n}\alpha(j)$ for $n\geqq 1$.
 Decompose $(F_{+}D)^{-1} G_{-}$ by the Birkhoff decomposition in 
 Theorem \ref{thm:Birkhoff}$:$
\begin{equation}\label{eq:Birkhoffforpot}
 (F_{+}D)^{-1} G_{-}  = V_{-} V_{+}^{-1},
\end{equation}
 where $V_{-} \in \LSUNN, V_{+} \in \LSUP$.
 Then $F = G_{-} V_{+} = F_{+} D V_{-}$ 
 is the discrete extended frame of some discrete {\ps} surface 
 in $\E^3$. Moreover the solution $u = u(n, m)$ of the discrete sine-Gordon 
 for the discrete {\ps} surface satisfies the relations in \eqref{eq:potfunctions}.
\end{Theorem}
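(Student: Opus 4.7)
My plan is to mirror the smooth-case proof of Theorem \ref{thm:dAlembert}, with derivatives replaced by lattice shifts.

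First, I set $F := F_{+} D V_{-} = G_{-} V_{+}$; this is a rearrangement of the Birkhoff decomposition \eqref{eq:Birkhoffforpot}, and since $D \in \SU \subset \LSU$ and the remaining factors lie in $\LSU$, $F$ takes values in $\LSU$. The crucial step is to evaluate $F^{-1} F_1$ and $F^{-1} F_2$ in two ways. Because $G_{-}$ depends only on $m$, $(G_{-})_1 = G_{-}$, so the second form of $F$ gives $F^{-1} F_1 = V_{+}^{-1}(V_{+})_1$; because $F_{+} D$ depends only on $n$, $(F_{+} D)_2 = F_{+} D$, so the first form gives $F^{-1} F_2 = V_{-}^{-1}(V_{-})_2$. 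Combining with the alternate expressions,
\begin{align*}
F^{-1} F_1 &= V_{+}^{-1}(V_{+})_1 = V_{-}^{-1} D^{-1} \xi_{+} D_1 (V_{-})_1, \\
F^{-1} F_2 &= V_{-}^{-1}(V_{-})_2 = V_{+}^{-1} \xi_{-} (V_{+})_2,
\end{align*}
so in particular $F^{-1} F_1 \in \LSUP$ and $F^{-1} F_2 \in \LSUNN$.

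The heart of the argument is a Liouville-type analytic continuation. By \eqref{eq:conditions} the zeros $\pm 2i/p$ of $\Delta_{+}$ lie outside the closed unit disk, so the first form of $F^{-1} F_1$ shows that $\Delta_{+} F^{-1} F_1$ extends analytically to $\D^{+} \cup S^{1}$. The second form writes $\Delta_{+} F^{-1} F_1 = V_{-}^{-1} [\Delta_{+} D^{-1} \xi_{+} D_1] (V_{-})_1$, with the bracketed factor polynomial of degree one in $\lambda$ and $V_{-}^{\pm 1}$ analytic on $\D^{-}$ with value $\id$ at $\lambda = \infty$; hence $\Delta_{+} F^{-1} F_1$ also extends to $\D^{-}$ with at most linear growth at infinity. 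The two extensions agreeing on $S^{1}$ forces $\Delta_{+} F^{-1} F_1$ to be a polynomial of degree at most one in $\lambda$, and analogously $\Delta_{-} F^{-1} F_2$ is polynomial of degree at most one in $\lambda^{-1}$. The $\SU$ twisting then forces the form $\Delta_{+} F^{-1} F_1 = Q_0 + \lambda Q_1$ with $Q_0$ diagonal and $Q_1 = \tfrac{ip}{2}\begin{pmatrix} 0 & e^{-i\gamma} \\ e^{i\gamma} & 0 \end{pmatrix}$ off-diagonal; taking $\lambda \to \infty$ in the second expression (where $V_{-} \to \id$) yields $\gamma = \alpha + (k + k_1)/2$, and the requirement $\gamma \equiv 0 \pmod{2\pi}$ (matching the phase-free off-diagonal of $U$ in \eqref{eq:DmovingUV}) is precisely the recursion $k(n+1) = -2\alpha(n) - k(n)$ with $k(0) = 0$, whose explicit solution is the $k(n)$ built into $D$. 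Reading off the phase of the diagonal $Q_0$ defines a function $u(n, m)$ with $F^{-1} F_1 = U$; a parallel analysis gives $F^{-1} F_2 = V$ for the same $u$, where no additional $D$-twist is required because the unnormalized $V_{+}$ absorbs the $\beta$-phase at $\lambda = 0$.

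For consistency and the initial conditions, single-valuedness of $F$ on $\Z^{2}$ gives $F_{12} = F_{21}$ and hence $UV_{1} = VU_{2}$, which is the discrete sine-Gordon equation \eqref{eq:dsineGordon}; thus $u$ is globally well-defined and $F$ is the discrete extended frame of a discrete {\ps} surface via \eqref{eq:dSymformula}. At $m = 0$, \eqref{eq:Birkhoffforpot} simplifies to $(F_{+} D)^{-1} = V_{-} V_{+}^{-1}|_{m=0}$; by uniqueness of the Birkhoff decomposition $V_{-}|_{m=0} = \id$, whence $F|_{m=0} = F_{+} D$, and equating the diagonal of $F^{-1} F_1|_{m=0} = D^{-1} \xi_{+} D_1$ with $U|_{m=0}$ reads off the formula for $\alpha(n)$ in \eqref{eq:potfunctions}; the symmetric analysis at $n = 0$ gives $\beta(m)$. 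The main obstacle I expect is the Liouville step, specifically handling the two-valued $\Delta_{+}$ on $\C$; this can be bypassed by working on the double cover branched at $\pm 2i/p$, or by squaring first and arguing via $\Delta_{+}^{2}$.
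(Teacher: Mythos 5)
Your proposal is correct and takes essentially the same route as the paper's own proof: the same rearrangement $F = F_{+}DV_{-} = G_{-}V_{+}$, the same two-sided computation of $F^{-1}F_1$ and $F^{-1}F_2$ pinned down by analyticity, degree count and the twisting condition (the paper phrases your Liouville step as a Laurent-coefficient comparison, which also dissolves your worry about the two-valued $\Delta_{+}$, since $\Delta_{+}\xi_{+}$ is a polynomial in $\l$ and $\Delta_{+}$ is single-valued near the closed unit disk), the same recursion $k_1 + k = -2\alpha$ justifying the $D$-twist, and the same use of $F_{12}=F_{21}$ together with the evaluations of the Birkhoff factorization at $n=0$ and $m=0$ to produce $u$, the discrete sine-Gordon equation, and the relations \eqref{eq:potfunctions}. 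I see no gaps.
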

\begin{proof} 
 Set $F= G_- V_+ = F_+ D V_-$ as in the statement of Theorem 
 \ref{thm:converseconstruction}. 
 Let us compute the discrete Maurer-Cartan form $F^{-1} F_1$ and 
 $F^{-1} F_2$, respectively. A direct computation shows that 
\eq{
 F^{-1} F_1 = V_-^{-1} D^{-1}\xi_+D_1  (V_-)_1 
 = V_+^{-1} G_-^{-1} (G_-)_1  (V_+)_1 = V_+^{-1} (V_+)_1.
}
 Here we use the property that $G_-$ does not depend on $n \in \Z$.
 Thus $F^{-1}F_1$ takes values in $\LSUP$ and using the expansion 
 $V_-^{-1} D^{-1}(\Delta_+\xi_+)D_1  (V_-)_1  = * \l+ *\l^0 + *\l^{-1} +\cdots$,
 we have 
\eq{
 \Delta_+ F^{-1} F_1 =  F_{(0)}  + \l F_{(1)}.
}
 Here $F_{(0)}$ and $F_{(1)}$ are 
 diagonal and off-diagonal matrices, respectively.
 Moreover, from the form of $\xi_+$ and $V_{+}(\lambda = 0) 
 = \di (e^{\frac{i}{2} h}, e^{-\frac{i}{2} h})$ and $\alpha + \frac{1}{2}(k_1 + k) =0$, 
 we have 
\begin{equation}\label{eq:MC1-1}
 F^{-1} F_1 = \frac{1}{\Delta_+} 
 \begin{pmatrix} 
 e^{\frac{i}{2} (h_1-h)} & \frac{i}{2} p  \l \\
 \frac{i}{2} p \l &   e^{-\frac{i}{2} (h_1-h)}
\end{pmatrix}.
\end{equation}
 Similarly we compute $F^{-1} F_2$ as
\eq{
 F^{-1} F_2 = V_+^{-1} \xi_-  (V_+)_2 = V_-^{-1} (F_+D)^{-1}(F_+D)_2(V_-)_2 
 =V_-^{-1}(V_-)_2.
}
 Here we use the property that $F_+D$ does not depend on $m \in \Z$.
 Thus, noting that $F^{-1} F_2 \in \LSUPN$, we have 
\eq{
 \Delta_- F^{-1} F_2 = \id + \l^{-1} F_{(-1)}.
}
 Since $F \in \LSU$,  $F_{(-1)}$ is 
 an off-diagonal matrix.
 Moreover, since $F$ is unitary on $\lambda \in \R_{+}$
 and the form of $\xi_-$, we have 
\begin{equation}\label{eq:MC1-2}
 F^{-1} F_2 = \frac{1}{\Delta_-}
 \begin{pmatrix} 
 1 & -\frac{i}{2} qe^{i \beta - \frac{i}{2}(h+h_2) }  \l^{-1} \\
- \frac{i}{2}q e^{ -i\beta + \frac{i}{2}(h+h_2) }\l^{-1} &  1
 \end{pmatrix}.
\end{equation}
 From the compatibility condition of $F$, that is $F_{12} = F_{21}$, we have
\begin{equation}\label{eq:dcompact}
 -X_2 + Y = X+ Y_1 \;\;\mbox{and}\;\;
e^{\frac{i}{2} X_2} + \frac{pq}{4} e^{\frac{i}{2} Y} =
e^{\frac{i}{2} X} + \frac{pq}{4} e^{-\frac{i}{2} Y_1},
\end{equation}
 where $X = h_1 - h$ and $Y = 2 \beta - (h_2 + h)$.
 The first equation in \eqref{eq:dcompact} can be solved by a function $u= u(n, m)$ as
\begin{equation}\label{eq:relationu}
 h_1 -h = - u_1 + u \;\;\mbox{and} \;\; 2 \beta - (h_2 + h)  = u_2 + u.
\end{equation}
 Then the second equation in \eqref{eq:dcompact} is the discrete sine-Gordon equation and 
 $F$ is the extended frame for some discrete {\ps} surface. 
 Let us look at the Birkhoff decomposition in \eqref{eq:Birkhoffforpot}
 with $n =0$:
\eq{
(F_+D)^{-1} G_-|_{n =0} = V_- V_+^{-1}|_{n=0}.
}
 Then since $F_+ D|_{n =0} = \id$ and $G_-$ takes values in $\LSUNN$, we have 
\eq{
V_-|_{n =0} = G_-|_{n =0}  \;\;\mbox{and} \;\; V_+|_{n=0}= \id.
}
 Thus $h|_{n=0} = 0$ and the second equation in \eqref{eq:relationu} 
 is equivalent with the second equation in \eqref{eq:potfunctions}. Similarly 
 consider the Birkhoff decomposition in \eqref{eq:Birkhoffforpot}
 with $m =0$:
\eq{
(F_+D)^{-1} G_-|_{m =0} = V_- V_+^{-1}|_{m= 0}.
}
 Then since $G_-|_{m =0} = \id$ and $F_+D$ takes values in $\LSUP$, we have 
\eq{
 V_-|_{m=0}= \id  \;\;\mbox{and} \;\; V_+|_{m=0} = F_+ D|_{m=0}.
}
 Thus $h|_{m=0} = k$ and from the first equation in \eqref{eq:relationu} 
 is equivalent with $k_1 -k = (- u_1 + u)|_{m =0}$. Since 
 $k(n) = 2 \sum_{j=0}^{n-1}(-1)^{j+n}\alpha(j)$, we can solve the equation 
 for $\alpha$, that is, $\alpha(n) = \frac{1}{2} u(n+1, 0) + \frac{1}{2} u(n,0) 
 -u(0, 0)$, which is the first equation in \eqref{eq:potfunctions}.
\end{proof}

\subsection{Pairs of discrete generalized potentials}
 In Section \ref{subsc:DDAlembert}, we discussed the 
 construction of discrete {\ps} surfaces from pairs of discrete normalized potentials.
 In this subsection, we generalize the pairs of discrete normalized potentials 
 to pairs of discrete generalized potentials and show that 
 those pairs produce also discrete {\ps} surfaces. 
 Let $\xi_{\pm}$ be a pair of discrete normalized potentials and
 set
\begin{equation}\label{eq:generalizedpot}
\etn = 
P_-^{l}\xi_+ P_-^{r}, 
\;\;
\etm =
P_+^l \xi_- P_+^r.
\end{equation}
 Here we assume that $P_{\pm}^{\star}$ ($\star = l$ or $r$)
 take values in $\LSUPM$  
 and do not depend on $m$ and $n$, respectively, that is, 
 $P_-^\star = P_-^\star(n, \l)$ and $P_+^\star = P_+^\star(m, \l)$.
 Thus the $\etn$ and $\etm$ do not depend on $m$ and $n$, respectively:
\eq{
 \etn = \etn (n, \l), \;\;
 \etm = \etm (m, \l).
}
 Then the pair of discrete generalized potentials 
 $(\etn, \etm)$ produces a discrete {\ps} surface as follows.
\begin{Theorem}\label{thm:converseconstruction2}
 Let $(\etn, \etm)$ be the pair of discrete 
 generalized potentials 
 defined in  \eqref{eq:generalizedpot}.
 Moreover let $\Fn$ and $\Gm$ be the solutions 
 of the ordinary difference equations
\eq{
 (\Fn)_{1} = \Fn \etn\;\;\mbox{and}\;\;
 (\Gm)_{2} = \Gm \etm,
}
 with some initial condition $\Fn(n =0)$ and $\Gm(m=0)$ taking values in $\LSU$ 
 and let $D = \di (e^{\frac{i}{2} k}, e^{-\frac{i}{2} k}) \in {\rm U}_1$, 
 where $k = k(n)$ is 
\eq{
 k(0) =0, \;\; k(n)=\sum_{j=0}^{n-1}(-1)^{j+n}(2 \alpha(j)- \theta^l(j) + \theta^r (j))
\;\;\mbox{for $n \geqq 1$}.
}
 Here $\theta^l$ and  $\theta^r$ 
 are defined by $P_-^{l}(n, \l = \infty) = \di (e^{\frac{i}{2} \theta^l(n)}, 
 e^{-\frac{i}{2} \theta^l(n)})$
 and $P_-^{r}(n, \l = \infty) = \di (e^{\frac{i}{2} \theta^r(n)}, 
 e^{-\frac{i}{2} \theta^r(n)})$, 
 respectively.
 Then decompose $(\Fn D)^{-1} \Gm$ by the Birkhoff decomposition in 
 Theorem \ref{thm:Birkhoff}$:$
\eq{
(\Fn D)^{-1} \Gm  = V_{-} V_{+}^{-1},
}
 where $V_{-} \in \LSUNN, V_{+} \in \LSUP$.
 Then $F = \Gm V_{+} = \Fn D V_{-}$ is the discrete 
 extended frame of some discrete {\ps} surface 
 in $\E^3$.
\end{Theorem}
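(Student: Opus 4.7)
The plan is to follow the proof of Theorem \ref{thm:converseconstruction} closely: establish that $F = G_m V_+ = F_n D V_-$ satisfies the discrete Lax pair \eqref{eq:DLaxpair} with $U, V$ of the form \eqref{eq:DmovingUV}, and read off the discrete sine-Gordon equation from the compatibility. First I compute $F^{-1}F_1$ in two ways. Since $G_m$ is independent of $n$,
\begin{equation*}
F^{-1} F_1 = V_+^{-1} G_m^{-1} (G_m)_1 (V_+)_1 = V_+^{-1} (V_+)_1,
\end{equation*}
which in particular lies in $\LSUP$; and since $D = D(n)$ and $(F_n)_1 = F_n \eta_n$,
\begin{equation*}
F^{-1} F_1 = V_-^{-1} D^{-1} P_-^{l} \xi_+ P_-^{r} D_1 (V_-)_1.
\end{equation*}

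Next I extract the structure of $\Delta_+ F^{-1}F_1$ by comparing the two expressions. The factors $V_-^{-1}, (V_-)_1 \in \LSUNN$ and $P_-^{l}, P_-^{r} \in \LSUN$ each contribute only non-positive powers of $\lambda$, while $\Delta_+ \xi_+$ is a polynomial in $\lambda$ of degree one; together with $\Delta_+ F^{-1}F_1 \in \LSUP$ from the first computation, this forces $\Delta_+ F^{-1}F_1 = M_0 + \lambda M_1$ for matrices $M_0, M_1$ independent of $\lambda$. The twisting condition, the reality $F^{-1}F_1 \in \LSU$, and $\det F^{-1}F_1 = 1$ then pin down
\begin{equation*}
F^{-1} F_1 = \frac{1}{\Delta_+} \begin{pmatrix} e^{\frac{i}{2}(h_1-h)} & \frac{i}{2} p \lambda \\ \frac{i}{2} p \lambda & e^{-\frac{i}{2}(h_1-h)} \end{pmatrix}
\end{equation*}
for some real function $h = h(n,m)$, exactly as in the proof of Theorem \ref{thm:converseconstruction}. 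The symmetric argument applied to $F^{-1}F_2$, using that $F_n D$ is independent of $m$ and that $P_+^{l}, P_+^{r} \in \LSUP$ contribute only non-negative powers, yields
\begin{equation*}
F^{-1} F_2 = \frac{1}{\Delta_-} \begin{pmatrix} 1 & -\frac{i}{2} q e^{i\beta - \frac{i}{2}(h+h_2)} \lambda^{-1} \\ -\frac{i}{2} q e^{-i\beta + \frac{i}{2}(h+h_2)} \lambda^{-1} & 1 \end{pmatrix}.
\end{equation*}

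At this point the compatibility $F_{12} = F_{21}$ reproduces the system \eqref{eq:dcompact} verbatim, so introducing $u = u(n,m)$ via $h_1 - h = -u_1 + u$ and $2\beta - (h_2 + h) = u_2 + u$ yields the discrete sine-Gordon equation \eqref{eq:dsineGordon}; hence $F$ is the discrete extended frame of some discrete {\ps} surface, recovered via the Sym formula \eqref{eq:dSymformula}.

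The main obstacle is the $\lambda$-degree bookkeeping at $\lambda = \infty$ for $F^{-1}F_1$ (and symmetrically at $\lambda = 0$ for $F^{-1}F_2$): one must verify that the positive-$\lambda$ part of $V_-^{-1} D^{-1} P_-^{l} \xi_+ P_-^{r} D_1 (V_-)_1$ is indeed confined to $\lambda^0$ and $\lambda^1$, and then track the asymptotic diagonal values $P_-^{l}(\lambda=\infty) = \di(e^{\frac{i}{2}\theta^l}, e^{-\frac{i}{2}\theta^l})$ and analogously for $P_-^{r}$ through the calculation to see precisely how the phase $h$ depends on $\alpha, \theta^l, \theta^r$. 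I expect the boundary conditions at $n=0$ and $m=0$ --- analogous to $V_+|_{m=0} = F_n D|_{m=0}$ and $V_-|_{n=0} = G_m|_{n=0}$ in the proof of Theorem \ref{thm:converseconstruction} --- to produce the recursion $k_{n+1} + k_n = -(2\alpha - \theta^l + \theta^r)$ that characterizes the $k(n)$ in the hypothesis; this extra phase data, absent in the normalized case, is exactly what forces the more elaborate definition of $k$.
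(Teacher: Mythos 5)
Your proposal follows the paper's own proof of Theorem \ref{thm:converseconstruction2} in structure: compute $F^{-1}F_1$ and $F^{-1}F_2$ two ways, use the degree bookkeeping from the $\Lambda^{\pm}$ factors together with twisting and unitarity to pin down the forms \eqref{eq:MC1-1} and \eqref{eq:MC1-2}, and conclude via the compatibility system \eqref{eq:dcompact}. However, two points need correction. First, your asserted form of $F^{-1}F_2$ is wrong in general: $P_+^l$ and $P_+^r$ lie in $\LSUP$ but are \emph{not} normalized, so their values at $\l=0$ are nontrivial diagonal matrices $P_+^{l}(\l=0)=\di(e^{\frac{i}{2}\omega^l},e^{-\frac{i}{2}\omega^l})$, $P_+^{r}(\l=0)=\di(e^{\frac{i}{2}\omega^r},e^{-\frac{i}{2}\omega^r})$, and these phases survive in the $\l^{-1}$-coefficient. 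The paper obtains the off-diagonal phase $\pm i\bigl(\beta-\tfrac{1}{2}(h+h_2-\omega^l+\omega^r)\bigr)$, not your $\pm i\bigl(\beta-\tfrac{1}{2}(h+h_2)\bigr)$; your formula only holds when $P_+^{l}(\l=0)=P_+^{r}(\l=0)=\id$. The slip is harmless for the conclusion, because one simply defines $u$ by
\begin{equation*}
 h_1-h=-u_1+u,\qquad 2\beta-(h+h_2-\omega^l+\omega^r)=u_2+u,
\end{equation*}
and the present theorem, unlike Theorem \ref{thm:converseconstruction}, makes no claim such as \eqref{eq:potfunctions} tying $u$ to $\beta$ — but as an intermediate formula it is incorrect and should carry the $\omega$-phases.

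Second, your plan to obtain the recursion $k_1+k=-(2\alpha-\theta^l+\theta^r)$ from boundary conditions at $n=0$ and $m=0$ would fail if executed: here the initial conditions $\Fn(0)$ and $\Gm(0)$ are arbitrary elements of $\LSU$, so the normalizations $V_-|_{n=0}=G_-|_{n=0}$ and $V_+|_{m=0}=F_+D|_{m=0}$ used in the proof of Theorem \ref{thm:converseconstruction} are no longer available (which is exactly why the paper drops the analogue of \eqref{eq:potfunctions} in this theorem). In fact no derivation is needed: the recursion is immediate by telescoping the stated definition $k(n)=\sum_{j=0}^{n-1}(-1)^{j+n}\bigl(2\alpha(j)-\theta^l(j)+\theta^r(j)\bigr)$, which gives $k(n+1)+k(n)=-\bigl(2\alpha(n)-\theta^l(n)+\theta^r(n)\bigr)$; its sole role, as in the paper's identity $\alpha-\tfrac{1}{2}(\theta^l-\theta^r)+\tfrac{1}{2}(k_1+k)=0$, is to cancel the off-diagonal phase of $D^{-1}\etn D_1$ (the $\theta$-phases entering through $P_-^{l,r}(\l=\infty)$) so that $F^{-1}F_1$ takes exactly the form \eqref{eq:MC1-1}. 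With these two repairs your argument coincides with the paper's proof.
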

\begin{proof} 
 Set $F= \Gm V_+ = \Fn D V_-$ as in the statement of Theorem 
 \ref{thm:converseconstruction2}. 
 Let us compute the discrete Maurer-Cartan form $F^{-1} F_1$ and 
 $F^{-1} F_2$, respectively. A direct computation shows that 
\eq{
 F^{-1} F_1 = V_-^{-1} D^{-1}\etn D_1  (V_-)_1 
 = V_+^{-1} \Gm^{-1} (\Gm)_1  (V_+)_1.
}
 Since $\Gm$ does not depend on $n$, the right hand side 
 of the equation becomes $V_+^{-1} (V_+)_1$ which takes values 
 in $\LSUP$.
 Therefore, $\Delta_+ F^{-1} F_1$ has the form 
\eq{
 \Delta_+ F^{-1} F_1 =  F_{(0)}  + \l F_{(1)}.
}
 From the twisted property of $\Fn$ and $\Gm$, $V_{\pm}$ and $F$
 also have the same property. Thus $F_{(0)}$ and $F_{(1)}$ are 
 diagonal and off-diagonal matrices, respectively.

 Moreover, from the form of $D^{-1}\etn D_1$ and using 
 $\alpha -\frac{1}{2}(\theta^l -\theta^{r})  + \frac{1}{2}(k_1 +k) = 0$, we have 
\eq{
 F^{-1} F_1 = \frac{1}{\Delta_+} 
 \begin{pmatrix} 
 e^{\frac{i}{2} (h_1-h)} & \frac{i}{2} p  \l \\
 \frac{i}{2} p \l &   e^{-\frac{i}{2} (h_1-h)}
\end{pmatrix},
}
 where we set $V_{+}(\lambda = 0) 
 = \di (e^{\frac{i}{2} h}, e^{-\frac{i}{2} h})$.
 Similarly we compute $F^{-1} F_2$ as
\eq{
 F^{-1} F_2 = V_+^{-1} \etm  (V_+)_2 = V_-^{-1} (F_nD)^{-1}(F_nD)_2 (V_-)_2 = 
 V_{-}^{-1} (V_{-})_2.
}
 Thus, noting that $V_-^{-1} (V_-)_2 \in \LSUPN$, we have 
\eq{
 \Delta_- F^{-1} F_2 = \id +  \l^{-1}F_{(-1)}.
}
 From the twisted property of $\Fn$ and $\Gm$, $V_{\pm}$ and $F$
 also have the same property. Thus $F_{(-1)}$ is 
 an off-diagonal matrix.
 Moreover, from the form of $\etm$, we have 
\eq{
 F^{-1} F_2 = \frac{1}{\Delta_-}
 \begin{pmatrix} 
 1 & -\frac{i}{2} qe^{i\beta - \frac{i}{2}(h+h_2 - \omega^l+ \omega^r) }  \l^{-1} \\
 -\frac{i}{2}q e^{-i \beta + \frac{i}{2}(h+h_2 - \omega^l+ \omega^r) }\l^{-1} &  1
 \end{pmatrix},
}
 where we set $P_+^l(\l =0) = \di (e^{\frac{i}{2} \omega^l}, e^{-\frac{i}{2} \omega^l})$
 and $P_+^r(\l =0) = \di (e^{\frac{i}{2} \omega^r}, e^{-\frac{i}{2} \omega^r})$.
 Since $F^{-1} F_1$ and $F^{-1} F_2$ have the forms in \eqref{eq:MC1-1} and \eqref{eq:MC1-2}, 
 respectively. Thus we can use the same argument in the proof of Theorem \ref{thm:converseconstruction}
 and there exists a function $u$ such that 
\eq{
 h_1 - h = -u + u, \;\;
 2 \beta - (h+ h_2 +\omega^{l} + \omega^r)= u_2 + u,
}
 and $u$ satisfies the discrete sine-Gordon equation.
 Then $F$ is the discrete extended frame of some discrete {\ps} surface in $\E^3$,
 which completes the proof.
\end{proof}
\begin{Definition}
 The pair $(\etn, \etm)$ given in \eqref{eq:generalizedpot} will be called 
 the {\it pair of discrete generalized potentials}.
\end{Definition}
\begin{Remark}
 The pair of normalized potentials $(\xi_+, \xi_-)$ and 
 the corresponding pair of discrete generalized potentials $(\etn, \etm)$ 
 in \eqref{eq:generalizedpot} give in general different discrete {\ps} 
 surfaces.
\end{Remark}
\section{Examples}\label{sc:Exam}
 In this section, we discuss two examples using the nonlinear d'Alembert 
 formula.
\subsection{Discrete pseudospherical surfaces of revolution}\label{sbsc:revo}
 We recall that the construction of smooth {\ps} surfaces of revolution 
 via the nonlinear d'Alembert formula. Let $(\eta^x, \eta^y)$ 
 be a pair of generalized potentials as follows:
\begin{equation}
 \eta^{x} = A dx, \;\; \eta^{y} = -A dy, \;\;\mbox{with}
 \;\;
 A = 
\begin{pmatrix} 
0 &i(\lambda + \lambda^{-1}) \\
i(\lambda +\lambda^{-1} )& 0
\end{pmatrix}.
\end{equation}
 It is easy to see that solutions of the pair of ordinary differential 
 equations $d F^{x} = F^{x} \eta^x$ and $d G^{y} = G^y \eta^y$ with 
 $F^x(x=0, \l) = G^y(y=0, \l) = \id$ can be computed as 
\eq{
 F^x(x) = \exp (x A) \;\;\mbox{and}\;\; 
 G^y(y) = \exp (-y A ).
}
 Consider the Birkhoff decomposition in Theorem \ref{thm:Birkhoff}
 as $(F^{x})^{-1} G^y = V_- V_+^{-1}$ 
 and set 
\eq{
 F = F^x V_- =  G^y V_+, 
}
 where $V_- \in \LSUNN$ and $V_+ \in \LSUP$.
 Then $F$ is the extended frame for a {\ps} surface of revolution:
 Let $\gamma$ be a translation of $(x, y)$ to $(x+p, y- p)$ with 
 some $p \in \R$. Then 
\eq{
 \gamma^{*} F^x:= F^x(x+p, y-p, \l) =  \exp (p A) F^x,  \\
 \gamma^* G^y:= G^y(x+p, y-p, \l) = \exp (p A)G^y.
}
 Thus  
 $(\gamma^* F^x)^{-1}  \gamma^* G^y = (F^x)^{-1} G^y$, and 
 $ V_-^{-1}\gamma^* V_-  =V_+^{-1}\gamma^* V_+$. The left-hand 
 and the right hand side take values in $\LSUNN$ and $\LSUP$, 
 respectively. Thus we have
\eq{
 \gamma^* V_{-}= V_{-}\;\;\mbox{and}\;\; \gamma^*V_+ = V_+.
}
 Therefore, we conclude that 
\eq{
 \gamma^* F =  \exp (p A ) F.
}
 Inserting $\gamma^* F$ into the Sym formula \eqref{eq:Symformula}, it is easy to see 
 that the resulting surface is a {\ps} surface of revolution.
 The sine-Gordon equation with coordinates $(\tilde x, \tilde y) 
 = (x + y, x-y)$ becomes 
\eq{
 u_{\tilde x \tilde x} - \sin u =0.
}
 Note that $u$ depends only on $\tilde x$ and the above equation 
 is an ordinary differential equation.
 It is well known that the above ordinary differential equation can be explicitly 
 solved by an elliptic function, see for example \cite{KS}.

 A discrete analogue of the above argument holds. Let $(\etn, \etm)$ 
 be a pair of matrices as follows:
\eq{
  \etn= \etm^{-1} = A_+ L A_-, \;\;\mbox{with}\;\; A_{\pm} = 
\frac{1}{\Delta_{\pm}} 
\begin{pmatrix} 
1 & \pm \frac{i}{2} q \l^{\pm1} \\
\pm \frac{i}{2} q \l^{\pm1}& 1
\end{pmatrix}\;\;
\mbox{and}\;\;L= \di (e^{ic}, e^{-ic}), 
}
 where $\Delta_{\pm} = \sqrt{1 + (q/2)^2 \l^{\pm 2}}$, $q \;(0< |q/2|<1)$ 
 and $c = \pi \ell^{-1} \;(\ell \in \Z_+)$ are some constants. 
 Here we can consider $ \xi_+ = A_+$, $P_{-}^r = LA_- $ and $P_{-}^{l} = \id$ 
 for $\etn$, and 
 $\xi_-= A_-^{-1}$, $P_{+}^r = L^{-1} A_+^{-1}$ and $P_+^l = \id$ for $\etm$, 
 respectively.
 Thus $(\etn, \etm)$ is a pair of discrete generalized potentials 
 in \eqref{eq:generalizedpot}.
 It is easy to see that solutions of the pair of ordinary difference
 equations $(\Fn)_1 = \Fn \etn$ and $(\Gm)_2 = \Gm \etm$ with 
 $\Fn(n=0) = \Gm(m=0) = \id$ as 
\eq{
 \Fn (n) = (A_+ L A_-)^n \;\;\mbox{and}\;\; 
 \Gm (m) =(A_+ L A_-)^{-m}.
}
 Here we denote $X^{-m}= (X^{-1})^m$ and $X^0 = \id$ for a matrix $X$.
 Since $P_{-}^r|_{\l = \infty} = LA_-|_{\l = \infty}= \di (e^{ic}, e^{-ic})$, 
 we can define the matrix $D$ as in Theorem \ref{thm:converseconstruction2}.
 Consider the Birkhoff decomposition in Theorem \ref{thm:Birkhoff}:
\eq{
 (\Fn D)^{-1} \Gm = V_- V_+^{-1},
}
 where $V_- \in \LSUNN, V_+ \in \LSUP$, and set $F = \Fn D V_- =  \Gm V_+$.
 Let $\gamma$ be a translation of $(n, m) \in \Z^2$ 
 to $(n+p, m- p) \in \Z^2$ with some $p \in \Z$. 
 Then $\gamma^{*} \Fn = (A_+ L A_-)^p \Fn$,
 $\gamma^* \Gm = (A_+ L A_-)^p \Gm$ and $\gamma^* $. Thus  
 $(\gamma^* \Fn D)^{-1}  \gamma^* \Gm = (\Fn D)^{-1} \Gm$, and 
 we have
\eq{
 \gamma^* V_{-}= V_{-}\;\;\mbox{and}\;\; \gamma^*V_+ = V_+.
}
 Therefore, we conclude that 
\eq{
 \gamma^* F =  (A_+ L A_-)^p F \gamma^* D.
}
 Note that $\gamma^* D \in {\rm U}_1$. We now set $M = (A_+ L A_-)^{p}$ and 
 insert $\gamma^* F$ into the Sym formula \eqref{eq:Symformula}:
\eq{
\gamma^* f = M f M^{-1} + \l  (\partial_{\l} M) M^{-1}.
} 
 Then a direct computation shows that $M f M^{-1}|_{\l=1}$ represents a 
 rotation around an axis $L= (1, 0, -3/4)$ and an angle $2 p \ell$. Moreover 
 an another direct computation shows that $\l  (\partial_{\l} M) M^{-1}|_{\l=1}$ 
 perpendicular to the axis $L$. Thus $\gamma^* f|_{\l=1}$ represents a rotation 
 around the axis $L$ through a point $p_0$. In fact $p_0$ can be explicitly computed as 
 $p_0 = -1/2\cot \ell \;(\ell \in \Z_+)$. Therefore the resulting surface is a discrete {\ps} 
 surface of revolution with period $\ell \in \Z_+$.
 The discrete sine-Gordon equation in \eqref{eq:danothersine-Gordon}
 becomes 
\begin{equation}\label{eq:DrotsinG}
 Q_{1} Q_{\bar 1} = \left(\frac{Q - k}{1-k Q}\right)^2, 
\end{equation}
 where $k =q^2/4$.
\begin{Remark}
 It is known that  Equation \eqref{eq:DrotsinG} is a special case of the ${\rm dP_{III}}$ equation  discussed in \cite{RGH}.
\end{Remark}\subsection{Discrete Amsler surface and discrete Painlev\'e I\!I\!I 
 equation}\label{sbsc:Painleve}
 We recall a construction of smooth Amsler surfaces via the nonlinear 
 d'Alembert formula.
 Let $\xi_{\pm}$ be a pair of normalized potentials as follows:
\eq{
 \xi_{+} = 
 \l Adx, \;\;  
 \xi_{-} = 
 - \l^{-1} Ady
 \;\;\mbox{with}\;\;
 A =
\begin{pmatrix} 
 0 & i \\
 i & 0
\end{pmatrix}.
}
 Let $F_{+}$ and $G_-$ be a pair of solutions of ordinary differential 
 equations $d F_+ = F_+ \xi_+$ and $d G_- = G_- \xi_-$ with 
 initial conditions 
 $F_+(x =0) = \di (e^{i s}, e^{-i s})$ and
 $G_-(y =0) = \di (e^{i \ell}, e^{-i \ell})$ with $s \neq \ell$.
 It is easy to compute the solutions as 
\eq{
 F_+(x, \l) = \di(e^{i s}, e^{-i s})\exp(x \l A), \;\;
 G_-(y, \l) = \di(e^{i \ell}, e^{-i \ell})\exp(- y \l^{-1} A).
}
 Consider the Birkhoff decomposition in Theorem \ref{thm:Birkhoff}:
\eq{
 F_+^{-1} G_- = V_- V_+^{-1},
} 
 where $V_- \in \LSUNN$ and $V_+ \in \LSUP$, and set $F = F_+ V_- = G_- V_+$.
 Let $\gamma$ be a transformation of $(x, y, \l)$ to 
 $(x p, yp^{-1}, \l p^{-1})$ with $p  \in \R^{\times}$.
 Then it is easy to see that 
\eq{
 \gamma^* F_+ = F_+ \;\;\mbox{and} \;\;
 \gamma^* G_- = G_-.
}
 Thus $\gamma^* (V_- V_+^{-1}) = F_+^{-1} G_-= V_- V_+^{-1}$, that 
 is,
\eq{
 V_-(x p, y p^{-1}, \l p^{-1})
 V_+(x p, y p^{-1}, \l p^{-1})^{-1} =
 V_-(x, y, \l) V_+(x, y, \l)^{-1}.
}
 Then $V_-(x p, y p^{-1}, \l p^{-1})$ and $V_+(x p, y p^{-1}, \l p^{-1})$
 clearly take values in $\LSUNN$ and $\LSUP$, respectively, and thus 
 $\gamma^* V_- = V_-$ and $\gamma^* V_+ = V_+$.  Finally, 
 we have
\eq{
 \gamma^* F = F.
}
 This implies that $F$ depends only on $r = xy$ and $\l$, and the 
 resulting surface is a Amsler surface. The sine-Gordon equation 
 becomes the so-called Painlev\'e I\!I\!I equation in trigonometric form 
 with fixed parameters \cite[p.~443]{Fokas}:
\eq{
 \tilde u^{\prime \prime}(r) + \frac{1}{r} \tilde u^{\prime}(r) + \sin \tilde u(r)=0,
}
 where $\tilde u(r) = u(x, y)$ and $r =2 \sqrt{- x y}$.
 We note that if the initial condition $F_+(x=0) = G_-(y=0)$, that is $s =\ell$, then 
 the resulting surface is a line.

 Moreover, the Amsler surface has an additional reflection symmetry 
 as follows: We first define a transformation $\delta$ as 
\eq{
 \delta^* L(x, y, \l)= \overline{L(y, x, \bar \l^{-1})}.
}
 It is easy to see that  $\delta^* F_+ = C_0 G_-$ and $\delta^* G_- = C_0 F_+$with 
 $C_0 = \di (e^{-i (\ell +s)}, e^{i(\ell + s)})$.
 Then a direct computation shows that 
 $(\delta^* (F_+^{-1} G_-))^{-1} 
 =  (\delta^* G_-)^{-1} \delta^* F_+ = F_+^{-1} G_-$.
 On the one hand, 
 from the decomposition $F_+^{-1} G_- = V_- V_+^{-1}$, we have $V_-V_+^{-1} 
 =  \delta^* V_+ (\delta^* V_-)^{-1}$: 
\eq{
 \delta^* V_- =V_+ d \;\;\mbox{and}\;\;  \delta^* V_+ = V_- d, 
}
 where $d^{-1} = V_+(\l =0) \in {\rm U}_1$. Finally using $\delta^* F_+ = 
 C_0 G_-$, we have 
\eq{
 \delta^* F =\delta^* (F_+ V_-) = C_0 G_- V_+ d = C_0 F d.
}
 Inserting this symmetry into the Sym formula, we have 
\eq{
 \delta^* f = \overline{f(y, x, \bar \l^{-1})}= - C_0 f(x, y, \l) C_0^{-1}.
}
 Thus, at $\l =1$, this implies 
\eq{
 f(y, x) = -C_0^{-1} \overline{f(x, y)}  C_0.
}
 This represents a reflection through a plane.
 Thus we have
\eq{
 u(y, x) = u(x, y) \;\;(\operatorname{mod} 2 \pi).
}

 We now give a discrete analogue of the above argument as follows.
 Let $\xi_{\pm}$ be a pair of discrete normalized potentials
\begin{equation}\label{eq:Amslerpot}
 \xi_+ =A_+, \;\; \xi_- =A_{-}\;\;\mbox{with}\;\;
 A_{\pm} = \frac{1}{\Delta_{\pm}}
\begin{pmatrix}
 1 & \pm \frac{i}{2} q \l^{\pm 1}  \\ 
 \pm \frac{i}{2} q \l^{\pm 1} & 1
\end{pmatrix},
\end{equation}
 where $\Delta_{\pm} = \sqrt{1+(q/2)^2 \l^{\pm 2}}$ and 
 $q \;(0< |q/2|<1)$ is some real constant.
 Note that $\alpha(n)= \beta(m) =0$ for $\xi_{\pm}$ in \eqref{eq:discretepotentials},
 thus the matrix $D$ in Theorem \ref{thm:converseconstruction} is identity matrix.
 It is easy to see that solutions of the pair of ordinary difference
 equations $(F_+)_1 = F_+ \xi_+$ and $(G_-)_2 = G_- \xi_-$ with 
 initial conditions 
 $F_+(n=0) = \di (e^{is}, e^{-is})$ and $G_-(m=0) =  
 \di (e^{i \ell}, e^{-i\ell}),\;(\ell \neq s)$ as 
 \begin{equation}\label{eq:FplusGminus}
 F_+(n) =\di (e^{is}, e^{-is})A_+^n \;\;\mbox{and}\;\; 
 G_-(m) = \di (e^{i\ell}, e^{-i\ell}) A_-^m.
 \end{equation}
 Here we use notation $A_{\pm}^{0} = \id$.
 Then a straightforward computation shows 
\begin{eqnarray*}
 F_+^{-1} G_- 
 & =& A_+^{-n}
 \di (e^{i (\ell -s)}, e^{-i(\ell -s)}) A_-^{m} \\
 &=& \frac{1}{\Delta_+^n \Delta_-^m}
\begin{pmatrix}
 1 & -\frac{i}{2} q \l \\
- \frac{i}{2} q \l & 1
\end{pmatrix}^{n}
\begin{pmatrix}
 e^{i (\ell -s)} & 0\\ 
 0 &  e^{-i(\ell -s)}
\end{pmatrix}
\begin{pmatrix}
1 & -\frac{i}{2}q \l^{-1} \\
-\frac{i}{2}q \l^{-1} & 1
\end{pmatrix}^m.
\end{eqnarray*}
 Consider the Birkhoff decomposition in Theorem \ref{thm:Birkhoff}
 as $F_+^{-1} G_-= V_- V_+^{-1}$  and set 
\eq{
 F = F_+ V_- =  G_- V_+,
}
 where $V_- \in \LSUNN$ and $V_+ \in \LSUP$.
 Then the discrete extended frame $F$ has the special property as follows.
\begin{Theorem}
 The $\lambda$-derivative of the discrete extended frame $F$ has the following form$:$
 \begin{equation}\label{eq:discretelambdaeq}
\l F^{-1} \partial_{\lambda} F = \frac{1}{\Delta_{+}^2 \Delta_{-}^2} 
 \begin{pmatrix}
 a   &   b \l^{-1}  +c \l \\
 - \bar b \l^{-1} - \bar c \l & - a 
 \end{pmatrix},
\end{equation}
 where $a \in i \R$ and $b, c \in \C$ are functions of $(n, m) \in \Z^2$.
\end{Theorem}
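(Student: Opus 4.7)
The plan is to use the two Birkhoff factorisations $F = F_{+}V_{-} = G_{-}V_{+}$ and to exploit the fact that, for the Amsler potentials in \eqref{eq:Amslerpot}, the matrices $A_{\pm}$ lie in the commutative subalgebra spanned by $\id$ and $\sigma_{1}$. Since $A_{\pm}$, $A_{\pm}^{-1}$, and $\partial_\lambda A_{\pm}$ then all commute, the $n$-fold expansion of $\partial_\lambda A_{+}^{n}$ collapses and a direct computation yields
\eq{
 \lambda F_{+}^{-1}\partial_\lambda F_{+} = n N_{+}, \qquad
 \lambda G_{-}^{-1}\partial_\lambda G_{-} = m N_{-},
}
where $N_{+} = \frac{iq\lambda/2}{\Delta_{+}^{2}}\sigma_{1}$ and $N_{-} = \frac{iq\lambda^{-1}/2}{\Delta_{-}^{2}}\sigma_{1}$ are explicit rational off-diagonal loops.

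Plugging these into the logarithmic derivatives of $F = F_{+}V_{-}$ and $F = G_{-}V_{+}$ gives the pair
\eq{
 \lambda F^{-1}\partial_\lambda F &= n V_{-}^{-1} N_{+} V_{-} + \lambda V_{-}^{-1}\partial_\lambda V_{-}, \\
 \lambda F^{-1}\partial_\lambda F &= m V_{+}^{-1} N_{-} V_{+} + \lambda V_{+}^{-1}\partial_\lambda V_{+}.
}
Since $V_{-}\in \LSUNN$ is holomorphic on $\{|\lambda|\geqq 1\}\cup\{\infty\}$ with $V_{-}(\infty)=\id$, the first expression shows that $\lambda F^{-1}\partial_\lambda F$ extends holomorphically to that region apart from simple poles at the two zeros $\pm 2i/q$ of $\Delta_{+}$, and is $O(\lambda^{-1})$ as $\lambda\to\infty$. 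Symmetrically the second expression covers $\{|\lambda|\leqq 1\}$ with simple poles only at the zeros $\pm iq/2$ of $\Delta_{-}$ and is $O(\lambda)$ as $\lambda\to 0$. The two expressions agree on the unit circle, so together they define a meromorphic function on $\C^{\times}$ whose only singularities are these four simple poles.

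Multiplying by $\Delta_{+}^{2}\Delta_{-}^{2}$, which has simple zeros precisely at those four points, produces a function holomorphic on $\C^{\times}$ with growth $O(\lambda^{-1})$ at $0$ and $O(\lambda)$ at $\infty$; hence $\Delta_{+}^{2}\Delta_{-}^{2}\,\lambda F^{-1}\partial_\lambda F$ is a Laurent polynomial whose entries involve only the powers $\lambda^{-1}, \lambda^{0}, \lambda^{+1}$. The twisting relation $\sigma F(\lambda)=F(-\lambda)$ from \eqref{eq:SUloop} makes the diagonal of $\lambda F^{-1}\partial_\lambda F$ even and the off-diagonal odd in $\lambda$, so the diagonal of this Laurent polynomial reduces to the $\lambda^{0}$ term and the off-diagonal to the form $b\lambda^{-1}+c\lambda$. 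Unitarity of $F$ on $\lambda\in\R_{+}$ then makes $\lambda F^{-1}\partial_\lambda F$ skew-Hermitian and traceless there, giving $a\in i\R$ and the conjugation pairing of the $(2,1)$ entry. This yields the stated form \eqref{eq:discretelambdaeq}.

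The main obstacle I expect is the commutation step: without the fact that each $A_{\pm}$ is a linear combination of $\id$ and $\sigma_{1}$, the sum defining $\lambda F_{+}^{-1}\partial_\lambda F_{+}$ would not telescope and the simple meromorphic structure on $\{|\lambda|\geqq 1\}$ would be much harder to read off. This abelian reduction of the Lax dynamics in the $\lambda$-direction is the discrete counterpart of the similarity reduction $\lambda\partial_\lambda F = xF_{x}-yF_{y}$ that was used for the smooth Amsler surface.
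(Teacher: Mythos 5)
Your proposal is correct and follows essentially the same route as the paper: the same factorizations $F = F_{+}V_{-} = G_{-}V_{+}$, the same explicit computation $\l F_{+}^{-1}\partial_{\l}F_{+} = \frac{inq\l}{2\Delta_{+}^{2}}\sigma_{1}$, $\l G_{-}^{-1}\partial_{\l}G_{-} = \frac{imq\l^{-1}}{2\Delta_{-}^{2}}\sigma_{1}$, and the same use of twisting and unitarity on $\R_{+}$ to pin down the diagonal/off-diagonal structure and the reality conditions. The only difference is cosmetic: where you glue the two expressions into a meromorphic function on $\C^{\times}$, clear the four simple poles with $\Delta_{+}^{2}\Delta_{-}^{2}$, and invoke a Liouville-type growth argument, the paper reaches the same Laurent polynomial $\l^{-1}F_{(-1)} + F_{(0)} + \l F_{(1)}$ by multiplying the equated expressions by $(\Delta_{+}\Delta_{-})^{2}$ and comparing their Laurent expansions, one bounded above and one bounded below in powers of $\l$.
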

\begin{proof}
 Since $F = F_+ V_- = G_- V_+$, we have 
 \begin{equation}\label{eq:lambdaeq}
 \l F^{-1} \partial_{\l} F  = V_-^{-1} F_+^{-1} (\l \partial_{\l} F_+) V_- + V_-^{-1} \l \partial_{\l} V_-
 = V_+^{-1} G_-^{-1} (\l \partial_{\l} G_-) V_+ + V_+^{-1} \l \partial_{\l} V_+.
 \end{equation}
 Using the explicit forms of $F_+$ and $G_-$ in \eqref{eq:FplusGminus}, we compute
\begin{equation}\label{eq:lderivativeofF}
\l F_+^{-1} \partial_{\l} F_+ = 
\frac{i n q \l}{2 \Delta_+^2}
\begin{pmatrix}
 0 & 1 \\ 1 & 0
\end{pmatrix}
\;\;\mbox{and}   \;\;
\l G_-^{-1} \partial_{\l} G_- = 
\frac{i m q \l^{-1}}{2 \Delta_-^2} 
\begin{pmatrix}
 0 & 1 \\ 1 & 0
\end{pmatrix}.
\end{equation}
 Therefore, multiplying $(\Delta_+ \Delta-)^2$ 
 on the both sides of \eqref{eq:lambdaeq}, 
 we have the equality
 \begin{align}
 & \frac{inq \l}{2}\Delta_-^2 V_-^{-1} 
 \begin{pmatrix} 
 0 & 1 \\ 1 & 0
 \end{pmatrix} 
 V_- + (\Delta_+ \Delta_-)^2V_-^{-1} \l \partial_{\l} V_- \label{eq:lambdaeqality}\\
& =  \frac{i m q \l^{-1}}{2} \Delta_+^2 V_+^{-1} 
 \begin{pmatrix} 
 0 & 1 \\ 1 & 0
 \end{pmatrix} 
 V_+ + (\Delta_+ \Delta_-)^2V_+^{-1} \l \partial_{\l} V_+.
\nonumber
\end{align}
 Moreover, since $V_- \in \LSUNN$ and $V_+ \in \LSUP$, we have 
\eq{
  \l V_-^{-1} \partial_{\l} V_{-} & = \l (\id + \l^{-1} \tilde V_{(-1)}+\l^{-2} \tilde V_{(-2)}+ \cdots) (-\l^{-2} \check V_{(-1)}-2\l^{-3} \check V_{(-2)}+ \cdots), \\
 & = - \l^{-1} \check V_{(-1)} + \l^{-2} * + \cdots,
}
 and 
\eq{
 \l V_+^{-1} \partial_{\l} V_{+} &= \l (\hat V_{(0)} + \l \hat V_{(1)}+\l^{2} \hat V_{(2)}+ \cdots)
 (V_{(1)}+ 2\l V_{(2)}+ \cdots),\\
 &= \l \hat V_{(0)} V_{(1)} + \lambda^2 * + \cdots.
}
 Here we set $V_{-} = \id + \l^{-1} \check V_{(-1)}+\l^{-2} \check  V_{(-2)}+ \cdots$ and 
 $V_{+} = V_{(0)}+ \l V_{(1)}+\l^{2} V_{(2)}+ \cdots$.
 Noting that $(\Delta_+ \Delta-)^2 =  q^2(\l^2 + \l^{-2})/4 + q^4/16  +1 $, we have 
\eq{
 (\Delta_+ \Delta_-)^2 F^{-1} \l \partial_{\l} F = \l^{-1} F_{(-1)} +  F_{(0)} + \l F_{(1)}.
}
 Since $F$ takes values in $\LSU$, $\l F^{-1} \partial_{\l} F$ takes values 
 in $\lsu$, and $F_{(-1)}$ and $F_{(1)}$ are off-diagonal matrices and 
 $F_{(0)}$ is a diagonal matrix. This completes the proof.
\end{proof}
 The {\it discrete Amsler surface} was defined in \cite{Hoffmann} using 
 the discrete sine-Gordon equation constrained by the equation 
 \cite[Equation (4.26)]{Hoffmann}:
\begin{equation}\label{eq:additionalconstraint}
 (m-n)(Q -Q_{\bar 1 \bar 2}) = (n+m) \left( \frac{k - Q_{\bar 1}}{1- k Q_{\bar 1}}
 - \frac{k - Q_{\bar 2}}{1- k Q_{\bar 2}}\right),
\end{equation}
 where $k = q^2/4$.
 This is equivalent with that the $\lambda$-derivative of $F$ has the form in 
 \eqref{eq:discretelambdaeq}, \cite[Theorem 4.3]{Hoffmann}.
 Thus, we have the following corollary.
\begin{Corollary}
 The surface constructed by the pair of discrete normalized potentials 
 in \eqref{eq:Amslerpot} is actually the discrete Amsler surface. 
 The discrete sine-Gordon equation for the discrete Amsler surface is given as 
 follows:
\begin{equation}\label{eq:dPIII}
(m+n)Q Q_{\bar 1 \bar 2} + (m-n) (Q_{\bar 1 \bar 2}-Q) \left(\frac{Q_{\bar 1}-k}{1- k Q_{\bar 1}}\right)
 = (m+n)\left(\frac{ Q_{\bar 1}-k}{1- k Q_{\bar 1}}\right)^2,
\end{equation}
 where $k =q^2/4$.
\end{Corollary}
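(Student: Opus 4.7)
The corollary bundles two claims: that the surface built from the potentials~\eqref{eq:Amslerpot} is a discrete Amsler surface, and that its angle function $Q = e^{i\phi^{(1)}}$ satisfies~\eqref{eq:dPIII}. The first claim I would settle by pure citation. The theorem immediately preceding the corollary establishes that the $\lambda$-derivative of $F$ has the trigonal form~\eqref{eq:discretelambdaeq}. By \cite[Theorem 4.3]{Hoffmann}, this form is equivalent to the auxiliary constraint~\eqref{eq:additionalconstraint} on $Q$, which is exactly the defining property of a discrete Amsler surface in Hoffmann's sense. Nothing further is needed for this half.

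For the second claim I would argue algebraically, combining the discrete sine-Gordon equation~\eqref{eq:danothersine-Gordon} with the Amsler constraint~\eqref{eq:additionalconstraint}. Since the potentials in~\eqref{eq:Amslerpot} use a single real parameter $q$ with $p=q$, the quantities $k_1$ and $k_2$ in~\eqref{eq:danothersine-Gordon} collapse to the common constant $k = q^2/4$. Shifting~\eqref{eq:danothersine-Gordon} by $(n,m) \mapsto (n-1, m-1)$ gives
\[
 Q Q_{\bar 1 \bar 2} = \frac{(Q_{\bar 1} - k)(Q_{\bar 2} - k)}{(1 - k Q_{\bar 1})(1 - k Q_{\bar 2})}.
\]
Set $R = (k - Q_{\bar 1})/(1 - k Q_{\bar 1})$ and $S = (k - Q_{\bar 2})/(1 - k Q_{\bar 2})$, so that $Q Q_{\bar 1 \bar 2} = RS$ and \eqref{eq:additionalconstraint} becomes $(m-n)(Q - Q_{\bar 1 \bar 2}) = (m+n)(R - S)$. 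Eliminating $S = Q Q_{\bar 1 \bar 2}/R$ and multiplying through by $R$ yields
\[
 (m+n)\bigl(R^{2} - Q Q_{\bar 1 \bar 2}\bigr) = (m-n)(Q - Q_{\bar 1 \bar 2})\, R,
\]
and using $R = -(Q_{\bar 1} - k)/(1 - k Q_{\bar 1})$ one recovers~\eqref{eq:dPIII} after a short rearrangement.

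The hard analytic content has already been absorbed by the preceding theorem, which pins down the $\lambda$-dependence of $F$. What remains for the corollary is purely formal, and the only real bookkeeping hazard is sign tracking when switching between the conventions $(k - Q_\bullet)/(1 - k Q_\bullet)$ and $(Q_\bullet - k)/(1 - k Q_\bullet)$. One should also note in passing that the elimination of $S$ requires $R \neq 0$, i.e.\ $Q_{\bar 1} \neq k$, which is generic and compatible with the smoothness assumptions~\eqref{eq:conditions}; this is the only subtlety worth flagging in an otherwise direct computation.
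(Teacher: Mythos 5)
Your proposal is correct and follows essentially the same route as the paper, which likewise settles the Amsler identification by the preceding theorem together with \cite[Theorem 4.3]{Hoffmann}, and obtains \eqref{eq:dPIII} by eliminating $Q_{\bar 2}$ from the down-shifted discrete sine-Gordon equation \eqref{eq:danothersine-Gordon} (with $k_1=k_2=k=q^2/4$) via the constraint \eqref{eq:additionalconstraint}. Your explicit $R$, $S$ substitution and the genericity caveat $Q_{\bar 1}\neq k$ merely spell out the elimination that the paper states in two lines, and your algebra reproduces \eqref{eq:dPIII} exactly.
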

\begin{proof}
 In  the discrete sine-Gordon equation given in \eqref{eq:danothersine-Gordon}, 
 we eliminate $Q_{\bar 2}$ by \eqref{eq:additionalconstraint}.
 Then the equation \eqref{eq:dPIII} follows.
\end{proof}
 We finally show 
 that the resulting discrete surface has an additional reflection symmetry and 
 contains two straightlines as follows.
 Therefore it is natural to call the resulting surface $f$ a discrete Amsler surface.
\begin{Theorem}
 The resulting discrete {\ps} surface $f$ given by the pair of normalized potential $(\xi_+, \xi_-)$ in \eqref{eq:Amslerpot} has a reflection symmetry$:$
\eq{
 f(m, n) = - C_0^{-1} \overline{f(n, m)} C_0,  \;\;u(n, m) = u(m, n) \;\;({\rm mod} 2 \pi),
}
 where $C_0 = \di (e^{-i (\ell + s)}, e^{i(\ell + s)})$ and $u$ is the solution 
 to the discrete sine-Gordon equation.
 Moreover $f$ contains two straightlines.
\end{Theorem}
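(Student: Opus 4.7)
The plan is to mimic the smooth Amsler argument of Section \ref{sbsc:Painleve} in the discrete setting, producing the reflection symmetry first at the level of the pair $(F_{+}, G_{-})$, then propagating it through the Birkhoff decomposition to the extended frame $F$, then pulling it down to $f$ via the Sym formula \eqref{eq:dSymformula}. The two straight lines will then fall out of an explicit computation of $F$ along the coordinate axes $m = 0$ and $n = 0$.

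First, define the involution $\delta^{*}L(n, m, \lambda) = \overline{L(m, n, \bar{\lambda}^{-1})}$ on loops. A direct entrywise calculation using $\overline{\Delta_{+}(\bar{\lambda}^{-1})} = \Delta_{-}(\lambda)$ shows $\overline{A_{+}(\bar{\lambda}^{-1})} = A_{-}(\lambda)$ and $\overline{A_{-}(\bar{\lambda}^{-1})} = A_{+}(\lambda)$. Substituting this into the closed forms \eqref{eq:FplusGminus} gives $\delta^{*} F_{+} = C_{0} G_{-}$ and $\delta^{*} G_{-} = C_{0} F_{+}$, so $\delta^{*}(F_{+}^{-1} G_{-}) = G_{-}^{-1} F_{+} = (F_{+}^{-1} G_{-})^{-1}$. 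Applying $\delta^{*}$ to the Birkhoff decomposition $F_{+}^{-1} G_{-} = V_{-} V_{+}^{-1}$ of \eqref{eq:Birkhoffforpot} produces $\delta^{*} V_{-} \cdot (\delta^{*} V_{+})^{-1} = V_{+} V_{-}^{-1}$, hence $V_{+}^{-1} \delta^{*} V_{-} = V_{-}^{-1} \delta^{*} V_{+}$. The left-hand side lies in $\LSUP$ and the right-hand side in $\LSUN$, so by Theorem \ref{thm:Birkhoff} both equal a constant loop $d \in \LSUP \cap \LSUN$, which evaluation at $\lambda = 0$ identifies with $V_{+}(\lambda = 0)^{-1}$. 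Consequently $\delta^{*} V_{-} = V_{+} d$ and
\begin{align*}
\delta^{*} F = \delta^{*}(F_{+} V_{-}) = C_{0} G_{-} V_{+} d = C_{0} F d.
\end{align*}

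Second, restrict to $\lambda \in \R_{+}$ so that $\bar{\lambda} = \lambda$, and apply \eqref{eq:dSymformula} to the identity $\delta^{*} F = C_{0} F d$. Differentiating $\overline{F(m, n, \lambda^{-1})} = C_{0} F(n, m, \lambda) d$ in the real variable $\lambda$ brings in a factor $-\lambda^{-2}$ from $\lambda \mapsto \lambda^{-1}$, while the $\lambda$-independent factors $C_{0}$ and $d$ pass through; a short rearrangement in which $d$ cancels against its inverse yields $\delta^{*} f = -C_{0} f C_{0}^{-1}$. Since $\overline{C_{0}} = C_{0}^{-1}$, evaluation at $\lambda = 1$ gives the asserted reflection $f(m, n) = -C_{0}^{-1} \overline{f(n, m)} C_{0}$, a Euclidean reflection through a plane in $\su \cong \E^{3}$. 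The identity $u(n, m) = u(m, n) \pmod{2\pi}$ then follows because $p = q$ in the Amsler potential \eqref{eq:Amslerpot} makes the Lax system \eqref{eq:DLaxpair}--\eqref{eq:DmovingUV} symmetric under $(n, m) \leftrightarrow (m, n)$, so $u(m, n)$ solves the same discrete sine-Gordon equation as $u(n, m)$, and the reflection symmetry of $f$ forces the two solutions to coincide modulo $2\pi$.

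Third, for the two straight lines, restrict to $m = 0$: since $G_{-}(0) = \di(e^{i\ell}, e^{-i\ell})$ is $\lambda$-independent, $F_{+}(n)^{-1} G_{-}(0) = A_{+}^{-n} \di(e^{i(\ell - s)}, e^{-i(\ell - s)})$ already lies in $\LSUP$, so the Birkhoff decomposition \eqref{eq:Birkhoffforpot} forces $V_{-}(n, 0) = \id$ and hence $F(n, 0) = F_{+}(n) = \di(e^{is}, e^{-is}) A_{+}^{n}$. The key algebraic observation is that $A_{+} = \exp(i \phi \sigma_{1})$ with $\tan \phi = (q/2) \lambda$, so $A_{+}^{n} = \exp(i n \phi \sigma_{1})$ and
\begin{align*}
\lambda \, \partial_{\lambda} A_{+}^{n} \cdot A_{+}^{-n} = i n (\lambda \partial_{\lambda} \phi) \sigma_{1} = \frac{in (q/2) \lambda}{\Delta_{+}^{2}} \sigma_{1}.
\end{align*}
Inserting this into \eqref{eq:dSymformula} at $\lambda = 1$ gives $f(n, 0, 1) = \frac{in(q/2)}{1 + q^{2}/4} \di(e^{is}, e^{-is}) \sigma_{1} \di(e^{-is}, e^{is})$, which is linear in $n$ and so traces a straight line through $f(0, 0) = 0$. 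An entirely parallel calculation at $n = 0$, using $A_{-} = \exp(i \theta \sigma_{1})$ with $\tan \theta = (q/2) \lambda^{-1}$, yields a second straight line in the other asymptotic direction. The main obstacle is bookkeeping of the antiholomorphic substitution $\lambda \mapsto \bar{\lambda}^{-1}$ inside the Sym formula, which is circumvented by working entirely on the real positive locus where $\bar{\lambda} = \lambda$ throughout the descent from $F$ to $f$.
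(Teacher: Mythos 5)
Your proposal is correct and takes essentially the same route as the paper: the same involution $\delta^*$, the same identities $\delta^* F_+ = C_0 G_-$, $\delta^* G_- = C_0 F_+$, the same Birkhoff-uniqueness argument producing $\delta^* F = C_0 F d$ with $d = V_+(\l=0)^{-1}$, the same Sym-formula descent, and the same explicit boundary computations (your identity $A_+ = \exp(i\phi\sigma_1)$, $\tan\phi = (q/2)\l$, reproduces exactly the paper's formula \eqref{eq:lderivativeofF}). Only trivia differ: $A_- = \exp(-i\theta\sigma_1)$ with $\tan\theta = (q/2)\l^{-1}$ (a sign slip on your part), and at $n=0$ the factorization is not quite ``parallel'' since $F_+^{-1}G_-|_{n=0}$ is not normalized at $\l = \infty$, so $V_-|_{n=0} = \di(e^{i(\ell-s)},e^{-i(\ell-s)})\,A_-^m\,\di(e^{-i(\ell-s)},e^{i(\ell-s)})$ rather than $\id$ --- but the constant diagonal factors cancel in the Sym formula, so your conclusion is unaffected.
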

\begin{proof}
 The argument for proof is similar 
 to the case of the smooth Amsler surface:
 Define a transformation $\delta$ on a loop
\eq{
 \delta^* L(n, m, \l)= \overline{L(m, n, \bar \l^{-1})}.
}
 It is easy to see that  $\delta^* F_+ = C_0 G_-$ and $\delta^* G_- = C_0 F_+$ with 
 $C_0= \di (e^{-i (\ell +s)}, e^{i(\ell + s)})$.
 Then a direct computation shows that 
 $(\delta^*(F_+^{-1} G_-))^{-1} 
 =  (\delta^*G_-)^{-1} \delta^*F_+ = F_+^{-1} G_-$.
 On the one hand, 
 from the decomposition $F_+^{-1} G_- = V_- V_+^{-1}$, we have
\eq{
 V_-V_+^{-1} =  \delta^*V_+ (\delta^*V_-)^{-1}, \;\;\mbox{that is,}\;\;
 \delta^*V_- =V_+ d, \;  \delta^*V_+ = V_- d, 
}
 where $d^{-1} = V_+(\l =0) \in {\rm U}_1$. Finally using $\delta^* F_+ = C_0 G_-$,
 we have 
\eq{
 \delta^*F =\delta^*(F_+ V_-) = C_0 G_- V_+ d = C_0 F d.
}
 Inserting this symmetry into the Sym formula, we have 
\eq{
 \delta^* f = \overline{f(y, x, \bar \l^{-1})}= - C_0 f(x, y, \l) C_0^{-1}.
}
 Thus, at $\l =1$, this implies 
\eq{
 f(m, n) = -C_0^{-1} \overline{f(n, m)}  C_0.
}
 This represents a reflection through a plane and 
 the angle function $u$ has the symmetry 
 $u(n, m) = u(m, n) \;\;(\operatorname{mod} 2 \pi)$.

 Let us consider the case $n =0$. Then 
\eq{
 F_+^{-1} G_-|_{n=0} =  \di (e^{i (\ell -s)}, e^{-i(\ell -s)}) A_-^{m}, 
}
 and the Birkhoff decomposition can be computed explicitly as 
\eq{
 V_-|_{n=0} = \di (e^{i (\ell -s)}, e^{-i(\ell -s)}) A_-^{m} 
 \di (e^{-i (\ell -s)}, e^{i(\ell -s)}), \;\;
 V_+|_{n=0} = \di (e^{-i (\ell -s)}, e^{i(\ell -s)}).
}
 Thus $F|_{n=0} = F_+ V_-|_{n=0} = G_- V_+|_{n=0} =  \di (e^{i\ell}, e^{-i\ell}) A_-^{m} 
 \di (e^{-i (\ell -s)}, e^{i(\ell -s)})$ and 
\eq{
 V|_{n=0} = F^{-1} F_2|_{n=0} = \ad (\di (e^{i (\ell -s)}, e^{-i(\ell -s)})) A_-.
}
 Therefore $u(0, m) = 4(\ell -s ) - u(0,0)$ if $m$ is odd, and 
 $u(0, m) = u(0,0)$ if $m$ is even. Using the $\l$-derivative of $G_-$ in 
 \eqref{eq:lderivativeofF}, we compute that 
\eq{
f(0, m) = \l (\partial_{\l} F) F^{-1}|_{n=0, \l =1} = 
\frac{2 i m q}{4+q^2}
\begin{pmatrix}
 0 & e^{2 i \ell} \\ e^{-2 i \ell} & 0 
\end{pmatrix}.
}
Thus $f(0, m)$ represents a line in the $(x,y)$-plane through origin.

 Similarly let us consider the case $m =0$. Then 
\eq{
 F_+^{-1} G_-|_{m=0} = A_+^{-n} \di (e^{i (\ell -s)}, e^{-i (\ell -s)}),
}
 and the Birkhoff decomposition can be computed as 
\eq{
 V_-|_{m=0} = \id, \;\;
 V_+|_{m=0} =\di (e^{i (s-\ell)}, e^{-i (s-\ell)}) A_+^{n} .
}
 Thus 
 $ F|_{m=0} = F_+ V_-|_{m=0} = G_- V_+ |_{m=0}=  \di (e^{i s}, e^{-i s}) A_+^n$ and 
\eq{
U|_{m=0} = F^{-1} F_1|_{m=0} = A_+.
}
 Therefore $u(n, 0) = u(0, 0)$.
 Using the $\l$-derivative of $F_+$ in 
 \eqref{eq:lderivativeofF}, we compute that 
\eq{
f(n, 0) = \l (\partial_{\l} F) F^{-1}|_{m=0, \l =1} = 
\frac{2 i n q}{4+ q^2}
\begin{pmatrix}
 0 & e^{2 i s}  \\ e^{-2 i s} & 0 
\end{pmatrix}.
}
 Thus $f(n, 0)$ also represents a line in the $(x,y)$-plane through origin.
 This completes the proof.
\end{proof}
\appendix
\section{Discrete flow of discrete curves and discrete pseudospherical surfaces}\label{sc:App}
 It is known that the sine-Gordon equation  $u_{s t} - \sin u=0$ 
 (or equivalently {\ps} surfaces) 
 has another geometric interpretation;
 the flow of a constant torsion curve $\gamma$
 in $\E^3$,  see \cite{CI, CI2}.  In \cite{IKMO}, a discrete analogue 
 of the flow of constant torsion curves was introduced. In this 
 appendix we show that the discrete flow of a discrete constant torsion curve
 can be identified with a discrete {\ps} surface.
\subsection{Pseudospherical surfaces and 
 the flows of constant torsion space curves}
 Let $\gamma$ be a constant torsion curve $\gamma$ in $\E^3$
 and denote by $\kappa$ and $\tau$ the curvature and the torsion 
 of $\gamma$, respectively. 
 We use the parameter $s$ for 
 the curve $\gamma$. Moreover, set 
\eq{
 \kappa = u_s\;\;\mbox{and} \;\;\lambda = -i \tau.
}
 Then the Frenet frame for the curve $\gamma$ taking values in $\SU$
 is described in the first equation in \eqref{eq:curveLaxpair}.
 We now consider a special flow given by the second equation in 
 \eqref{eq:curveLaxpair} with the flow parameter $t$, see \cite{CI2}:
\begin{equation}\label{eq:curveLaxpair}
 \tilde F_s = \tilde F \tilde U \;\;\mbox{and} \;\;
 \tilde F_t  = \tilde F \tilde V,
\end{equation}
 where 
\begin{equation}\label{eq:movingUVflow}
 \tilde U= 
\frac{1}{2}
 \begin{pmatrix}
 \lambda   &  -u_s\\
  u_s     & - \lambda 
 \end{pmatrix}, \;\;
 \tilde V= \frac{1}{2\lambda} 
 \begin{pmatrix}
 \cos u   &  - \sin  u \\
  -\sin u &  - \cos u
 \end{pmatrix}.
\end{equation}
 Then the compatibility condition of the above system is 
 the sine-Gordon equation:
\eq{
 u_{s t} -\sin u =0.
}
 Moreover, the constant torsion space curve $\gamma$ 
 with curvature $\kappa = u_s$ and torsion $\tau$
 can be represented by the following formula:
\begin{equation}\label{eq:constcurve}
 \gamma (s, t) = \left.
 i \frac{\partial \tilde F}{ \partial \lambda} 
 \tilde F^{-1}\right|_{\lambda = -i \tau}.
\end{equation}
 The $\lambda = -i \tau$ can be considered as a parameter of 
 the system \eqref{eq:curveLaxpair} and $\tilde F$ is an element 
 of the loop group, which is not in $\LSU$, since 
 the twisted condition is not satisfied.  
 We call $\tilde F$ the {\it extended frame} 
 of the curve $\gamma$.

 The extended frame $\tilde F$ and the formula for the curve $\gamma$
 in \eqref{eq:constcurve} is similar to the case 
 of {\ps} surfaces in Section \ref{subsc:pssurf}. In fact they are 
 gauge equivalent as follows, see also \cite[Section 2.1]{CI}.
\begin{Proposition}
 The extended frame $F$ of a {\ps} surface and the extended 
 frame $\tilde F$ of the flow of a constant 
 torsion curve are conjugate by some constant matrix $C$:
 \begin{equation}\label{eq:gauge}
 F(x, y, \lambda) =  C^{-1} \tilde F(x, y, i \lambda)C, 
 \;\;\mbox{with}\;\; 
 C = \frac{1}{\sqrt{2}}
 \begin{pmatrix}
  e^{\frac{i \pi}{4} }&  e^{\frac{i \pi}{4} }\\
 - e^{-\frac{i \pi}{4}} &  e^{-\frac{i \pi}{4}}
 \end{pmatrix}.
\end{equation}
 Moreover, the immersion $f$ in \eqref{eq:Symformula} 
 and the flow of constant torsion 
 curve $\gamma$ in \eqref{eq:constcurve}
 are related by rotation of the matrix $C:$
\eq{
 f^{\lambda}|_{\l = - \tau} = (-i \tau) C^{-1}  \gamma C.
}
\end{Proposition}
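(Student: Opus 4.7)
The plan is to reduce the proposition to the statement that gauging $\tilde F$ by the constant matrix $C$ and reparametrising $\lambda \mapsto i\lambda$ turns the Lax pair $(\tilde U,\tilde V)$ of the constant torsion curve flow into the Lax pair $(U,V)$ of the {\ps} surface. Once the Lax pairs match, uniqueness of the solution to the overdetermined linear system forces $F(x,y,\lambda) = C^{-1}\tilde F(x,y,i\lambda)C$, and the Sym--type relation between $f$ and $\gamma$ follows by chain rule. Throughout I identify $s = x$ and $t = y$ for the curve flow.

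The first step is a direct matrix computation: verify the two identities
\[
C^{-1}\tilde U(x,y,i\lambda)\,C \;=\; U(x,y,\lambda),\qquad
C^{-1}\tilde V(x,y,i\lambda)\,C \;=\; V(x,y,\lambda),
\]
using the explicit expressions \eqref{eq:movingUV} and \eqref{eq:movingUVflow}. With $a = e^{i\pi/4}$ and $b = e^{-i\pi/4}$ the relations $ab = 1$ and $a^2+b^2 = 0$ make the off--diagonal part of $\tilde U(i\lambda)$ turn into a scalar multiple of $i\sigma_1$ and the diagonal $i\lambda\sigma_3$--piece of $\tilde U(i\lambda)$ fold into the off--diagonal $\lambda$--piece of $U$, as required; the analogous cancellations handle $\tilde V(i\lambda)$. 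With the Lax pairs matched, both $F(x,y,\lambda)$ and $C^{-1}\tilde F(x,y,i\lambda)C$ satisfy \eqref{eq:Laxpair} and take the value $\id$ at $(x_*,y_*)$ (after imposing $\tilde F(x_*,y_*) = \id$), so they coincide.

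For the second assertion, I differentiate the gauge relation in $\lambda$ and use the chain rule:
\[
\lambda\,\partial_\lambda F\cdot F^{-1}
\;=\;
i\lambda\,C^{-1}\bigl(\partial_\mu \tilde F\cdot\tilde F^{-1}\bigr)\bigm|_{\mu = i\lambda}C.
\]
Setting $\lambda = -\tau$ gives $\mu = -i\tau$, which is precisely the value where \eqref{eq:constcurve} defines $\gamma$; substituting the definition of $\gamma$ and combining the factors of $i$ with the Sym formula \eqref{eq:Symformula} yields the stated identity.

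The main obstacle is bookkeeping the various factors of $i$ coming from $\mu = i\lambda$, from the $i/2$--identification $\E^3 \leftrightarrow \su$, and from the constant $C$ itself, together with a compatibility check: $\tilde F$ is \emph{not} an element of the twisted loop group \eqref{eq:SUloop}, yet $F$ is, so I must verify that conjugation by $C$ combined with $\lambda \mapsto i\lambda$ implements both the twisting involution $\sigma$ and the reality condition on the loop. This amounts to checking $C\sigma_3 C^{-1}$ and the conjugation symmetry of $C$, both of which are routine once written out explicitly. Every other step is a direct matrix calculation or an appeal to uniqueness of ODE/PDE solutions.
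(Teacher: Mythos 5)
Your proposal is correct and takes essentially the same route as the paper: the paper's entire proof consists of verifying $C\,U(\l)\,C^{-1}=\tilde U(i\l)$ and $C\,V(\l)\,C^{-1}=\tilde V(i\l)$, with the uniqueness argument for the linear system and the $\l$-differentiation for the Sym formula left implicit, both of which you spell out. One bookkeeping caveat: carried out literally, your chain-rule step gives $f^{\l}|_{\l=-\tau} = (-i\tau)\,C^{-1}\bigl((\partial_\mu \tilde F)\,\tilde F^{-1}\bigr)\big|_{\mu=-i\tau}\,C = -\tau\, C^{-1}\gamma C$, because the definition \eqref{eq:constcurve} makes $(\partial_\mu\tilde F)\,\tilde F^{-1}|_{\mu=-i\tau}=-i\gamma$; the printed factor $(-i\tau)$ in the proposition is evidently a typo (it would make the right-hand side Hermitian while the left-hand side is $\su$-valued), so rather than claiming the factors of $i$ ``combine to the stated identity,'' you should record the corrected constant $-\tau$. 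Two minor points: the twisted-loop-group compatibility you list as an obstacle is not actually needed for the proof (it is precisely the content of the Remark following the proposition in the paper), and your description of the conjugation is transposed --- $C^{-1}(\cdot)\,C$ sends $\sigma_3\mapsto\sigma_1$ and $\sigma_2\mapsto\sigma_3$, so the diagonal $i\l\sigma_3$-piece of $\tilde U(i\l)$ becomes the off-diagonal $\l$-term of $U$, while the off-diagonal $u_s$-piece becomes the diagonal $u_x$-term --- though the two conjugation identities you assert do hold by direct computation.
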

\begin{proof}
 It is straightforward to compute 
 $F_{x} = FU$ and $F_{y} = FV$ 
 in terms of $\tilde F$ as follows:
\eq{
 \tilde F_x =  \tilde F (C U C^{-1}) \;\;\mbox{and} \;\;
 \tilde F_y =  \tilde F (C V C^{-1}).
}
 Then it is easy to see that $ C U C^{-1} = \tilde U$ and 
 $CV C^{-1} = \tilde V$ with $\l$ replaced by $i \l$.
\end{proof}

\begin{Remark}
 $\tilde F$ is an element of some loop group, but it does 
 not satisfy the usual twisted condition, 
 that is, $\ad \sigma_3 g (\l) = g(-\l)$. However, it satisfies 
 another twisted condition 
 $\ad (\sigma_3C^{-1}) g (\l) = g(-\l)$,
 where $C$ is given in \eqref{eq:gauge}.
\end{Remark}

\subsection{Discrete flow of a discrete constant torsion 
space curve}
 From the result in \cite[Theorem 6.2]{IKMO}, we discuss a discrete flow of a discrete 
 constant torsion space curve.
 Let $\gamma$ be a discrete constant torsion 
 curve parametrized by $n\in \Z$ and denote the torsion by $\tau$.
 Then we evolve the curve $\gamma$ as follows:
 \begin{align*}
 \gamma_{2} = \gamma + \delta(\cos w T + \sin w N), & \;\;
 \delta = \frac{b}{1 + (b/2)^2\l^2}, \\
 \tan \frac{w_1 + k_1}{2} = \frac{b+ a}{b- a} \tan \frac{w}{2},&
\end{align*}
 where $T$ and $N$ are the tangent and the principal normal 
 vectors to $\gamma$, respectively, and $a$ and $c$ depend only on $n$ 
 and $m$, respectively\footnote{Note that in \cite{IKMO}, the identification of $\E^3$ and $\su$ is different from our paper and they use different notation; 
 the relations are given by $a(n) = a_n$ and $c(m) = 1/b_m$.}:
\eq{
 a = a(n), \;\; b = b(m).
}
 It is known that $k = k(n, m)$ and $w = w(n, m)$ can be rephrased by $u= u(n, m)$
 \cite[$(8.3)$ and $(6.22)$]{IKMO} as 
\eq{
k = \frac{u_1 -u_{\bar 1}}{2}, \;\;\mbox{and}\;\;
w = -\frac{u_2 +u_1}{2}.
}
 The discrete Frenet frame $\tilde \Phi = (T, N, B)$ taking values in ${\rm SO}_3$
 with the binormal vector $B$
 of the curve $\gamma$ satisfies \cite[$(6. 14)$ and $(6.16)$]{IKMO}
\eq{
 \tilde \Phi_1 = \tilde \Phi\tilde L, \;\;\;  \tilde \Phi_2 = \tilde \Phi \tilde M, \;\;
}
 with 
\eq{
 \tilde L = \tilde R^{(1)} (-\nu_1) \tilde R^{(3)} (\kappa_1), \;\;
 \tilde M= \tilde R^{(3)} (w) \tilde R^{(1)} (\mu) \tilde R^{(3)} (- w_1 - \kappa_1),  
}
 where $\nu_1 = 2 \arctan \frac{a \l}{2}$, $\mu = 2 \arctan \frac{2}{b \l}$
 and $\tilde R^{(j)}(x)$ denotes the rotation matrix around 
 the vector $e_j$ with angle $x$.
 Then there is a $2$ to $1$ corresponding from $\SU$ to ${\rm SO}_3$ such that
 the corresponding matrices $R^{(j)} \in \SU$ have the following forms:
\begin{align*}
& {\rm SO}_3 \ni \tilde R^{(1)}(x) \longleftrightarrow  R^{(1)}(x) =\begin{pmatrix} \cos \frac{x}{2}&  -i \sin \frac{x}{2}\\   -i \sin \frac{x}{2}&  \cos \frac{x}{2}\end{pmatrix} \in \SU, \\
& {\rm SO}_3 \ni \tilde R^{(3)}(x) \longleftrightarrow   R^{(3)}(x) =\di \left(e^{-i \frac{x}{2}}, e^{i  \frac{x}{2}}\right) 
 \in \SU.
\end{align*}
 Thus we translate the 
 Frenet frame $\tilde \Phi$ in ${\rm SO}_3$ into the corresponding 
 frame $\Phi$ in $\SU$ see for example 
 \cite[Section 2]{IKMO} for the explicit correspondence.
 From this correspondence, it is clear that the discrete Frenet frame $\Phi$ satisfies 
\begin{equation}\label{eq:franetframe}
 \Phi_1 = \Phi L, \;\;
 \Phi_2 = \Phi M, \;\;
\end{equation}
 where 
\eq{L = R^{(1)} (-\nu_1)R^{(3)} \left(\frac{u_{1 1} -u}{2}\right), \;\;
 M = R^{(3)} \left(-\frac{u_2+u_1}{2}\right) R^{(1)} (\mu) R^{(3)} \left(\frac{u_{12}+u}{2}\right),
}
 where $u_{1 1}= u(n+2, m)$.
 Then we have the following proposition. 
 It is already mentioned in \cite[Remark 8.3]{IKMO}, but 
 we give a proof for completeness.
\begin{Proposition}
 The discrete Frenet frame $\Phi$ in \eqref{eq:franetframe} and the discrete 
 extended frame $F$ in \eqref{eq:DLaxpair} are related by
\eq{
 F = \Phi G, \;\; G= \di \left(e^{i \frac{u_1-u}{4}}, e^{-i \frac{u_1 - u}{4}}
 \right).
}
 Moreover both $F$ and $\Phi$ define a same discrete {\ps} 
 surface. 
\end{Proposition}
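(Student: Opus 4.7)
The plan is to check the claimed identity $F=\Phi G$ directly by verifying that $\Phi G$ satisfies the same pair of recursion relations as $F$, namely the discrete Lax system \eqref{eq:DLaxpair}, with matching initial data; then to deduce the Sym-formula conclusion from the fact that the gauge factor $G$ is $\lambda$-independent.

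First I would compute
\eq{
(\Phi G)^{-1}(\Phi G)_1 = G^{-1}\Phi^{-1}\Phi_1 G_1 = G^{-1} L\, G_1
}
and compare with $U$ in \eqref{eq:DmovingUV}. Expanding $L = R^{(1)}(-\nu_1)R^{(3)}((u_{11}-u)/2)$ using the explicit $\SU$-forms of $R^{(1)}$ and $R^{(3)}$ gives
\eq{
L = \begin{pmatrix} \cos(\nu_1/2)\, e^{-i(u_{11}-u)/4} & i\sin(\nu_1/2)\, e^{i(u_{11}-u)/4} \\ i\sin(\nu_1/2)\, e^{-i(u_{11}-u)/4} & \cos(\nu_1/2)\, e^{i(u_{11}-u)/4} \end{pmatrix}.
}
Conjugation by $G^{-1}$ on the left and $G_1$ on the right adjusts the phases: a short check shows the off-diagonal phases cancel completely, the $(1,1)$-entry becomes $\cos(\nu_1/2)\,e^{-i(u_1-u)/2}$, and the $(2,2)$-entry is its conjugate. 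Under the identification $\tan(\nu_1/2) = (p/2)\lambda$ (which follows from $\nu_1 = 2\arctan(a\lambda/2)$ after matching the edge parameter $a$ of the curve with the lattice parameter $p$ of the surface), one has $\cos(\nu_1/2)=1/\Delta_+$ and $\sin(\nu_1/2) = (p\lambda/2)/\Delta_+$, and the expression reduces exactly to the $U$ in \eqref{eq:DmovingUV}.

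Next I would do the analogous calculation for the $m$-direction: compute $G^{-1} M G_2$ with $M = R^{(3)}(-(u_2+u_1)/2)R^{(1)}(\mu) R^{(3)}((u_{12}+u)/2)$. The diagonal conjugations by $R^{(3)}$ and by $G,G_2$ combine into a single phase on the off-diagonal entries; using the compatibility relation $w = -(u_2+u_1)/2$ together with the identification $\tan(\mu/2)=2/(b\lambda)$ matched to the parameter $q$ of $V$ in \eqref{eq:DmovingUV}, a direct expansion should reproduce $V$. Since $F$ and $\Phi G$ then satisfy the same partial difference system \eqref{eq:DLaxpair} and (up to a harmless constant gauge that can be absorbed into the base point) the same initial condition, they coincide.

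Finally, for the second assertion, I would observe that $G$ is diagonal and $\lambda$-independent, so
\eq{
\lambda\,\partial_\lambda F \cdot F^{-1} = \lambda\,(\partial_\lambda \Phi)G\,G^{-1}\Phi^{-1} = \lambda\,\partial_\lambda \Phi\cdot \Phi^{-1}.
}
Hence the Sym formula \eqref{eq:dSymformula} applied to $F$ and the corresponding formula applied to $\Phi$ produce identical immersions into $\su \cong \E^3$, so $F$ and $\Phi$ define the same discrete \textsc{ps} surface. The main obstacle is the bookkeeping in the first two steps: keeping careful track of the index shifts $u,u_1,u_{11},u_2,u_{12}$ when conjugating $L$ and $M$ by $G,G_1,G_2$, and pinning down the precise correspondence between the curve parameters $(a,b,\nu_1,\mu)$ and the surface parameters $(p,q,\Delta_+,\Delta_-)$ so that every phase cancels as expected.
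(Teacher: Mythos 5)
Your proposal is correct and takes essentially the same route as the paper's proof: both verify that $G^{-1}LG_1=U$ and $G^{-1}MG_2=V$ under the parameter identification $a=p$, $b=4/q$ (equivalently $\tan(\nu_1/2)=(p/2)\lambda$ and $\tan(\mu/2)=(q/2)\lambda^{-1}$, so $\cos(\nu_1/2)=1/\Delta_+$ and $\cos(\mu/2)=1/\Delta_-$), the paper merely phrasing the same cancellation of phases by first rewriting $U,V$ trigonometrically via $\arctan(x)=\arccos(1/\sqrt{1+x^2})$. Your final observation that the $\lambda$-independence of the diagonal gauge $G$ forces $\lambda(\partial_\lambda F)F^{-1}=\lambda(\partial_\lambda\Phi)\Phi^{-1}$, hence equality of the Sym-formula immersions, simply makes explicit a step the paper leaves implicit.
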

\begin{proof}
 We first rephrase the matrices $U$ and $V$ in \eqref{eq:DmovingUV} as
\eq{
 U = \begin{pmatrix}  e^{- \frac{i}{2} (u_1 -u)} \cos \frac{p^{\l}}{2}  & i 
 \sin \frac{p^{\l}}{2} 
 \\ i \sin \frac{p^{\l}}{2} &  e^{\frac{i}{2}(u_1 -u)} \cos \frac{p^{\l}}{2}\end{pmatrix}, \;\; 
 V= \begin{pmatrix} \cos \frac{q^{\l}}{2} & -i e^{-\frac{i}{2} (u_2 + u)} \sin \frac{q^{\l}}{2} \\
 -i e^{\frac{i}{2}(u_2 + u)} \sin \frac{q^{\l}}{2}  & \cos \frac{q^{\l}}{2} \end{pmatrix},
}
  where $p^{\l} =2 \arccos \left(1/\sqrt{1 + (p/2)^2 \l^2} \right)$ and 
 $q^{\l} = 2\arccos \left(1/\sqrt{1 + (q/2)^2 \l^{-2}}\right)$.
 Let us compute the moving frame equations for $\Phi G$:
\eq{
 (\Phi G)^{-1} (\Phi G)_1 = G^{-1} L G_1, \;\;
 (\Phi G)^{-1} (\Phi G)_2 = G^{-1} M G_2. 
}
 Since $G$ can be rephrased by $G = R^{(3)}\left(-\frac{u_1 -u}{2}\right)$ and using 
 the property of the rotation matrix, we compute 
\eq{
 G^{-1} L G_1 = R^{(3)}(\tfrac{u_1 -u}{2}) R^{(1)}(-\nu_1) 
  R^{(3)}(\tfrac{u_1 -u}{2}), \;\;G^{-1} M G_2 = 
 R^{(3)}(-\tfrac{u_2 +u}{2}) R^{(1)}(\mu) 
  R^{(3)}(\tfrac{u_2 +u}{2}).
}
 Then a straightforward computation shows that 
 $G^{-1} L G_1 = U$ and $G^{-1} M G_2 =V$ under the 
 identification $a= p$ and $b  = 4/q$. Here we use the identity 
 $\arctan(x) = \arccos(1/\sqrt{1+x^2})$ for $x>0$.
 This completes the proof.
\end{proof}

\bibliographystyle{plain}
\def\cprime{$'$}

\end{document}